
\documentclass[a4paper]{scrartcl}
\usepackage{a4wide}

\usepackage{amsmath,amssymb}
\usepackage{mathabx,bm}
\usepackage{enumerate}
\usepackage{xcolor}

\usepackage{authblk}

\usepackage{graphicx}


\usepackage{amsthm}
\usepackage{abstract}

\newtheorem{theorem}{Theorem}
\newtheorem{lemma}{Lemma}
\newtheorem{corollary}{Corollary}
\newtheorem{definition}{Definition}
\newtheorem{remark}{Remark}

\newtheorem{proposition}{Example}

\newcommand{\fcan}{f_\text{can}}

\newcommand{\fQ}{f_\mathcal{Q}}

\newcommand{\pgen}[1]{G_{#1}}
\newcommand{\gen}{L}
\newcommand{\genl}{L_{\text{Lan}}}

\newcommand{\R}{\mathbb{R}}

\newcommand{\eps}{\varepsilon}

\newcommand{\tr}{{\rm tr}\,}

\newcommand{\wrt}{with respect to }

\newcommand{\op}[1]{P^{#1}}
\newcommand{\opl}[1]{P_\text{Lan}^{#1}}

\newcommand{\Qspace}{\mathcal{Q}}
\newcommand{\Pspace}{\mathcal{P}}
\newcommand{\Xspace}{\mathcal{X}}
\newcommand{\Zspace}{\mathcal{Z}}
\newcommand{\Mspace}{\mathcal{M}}

\newcommand{\ep}{\varepsilon}
\newcommand{\proj}{\Pi}
\newcommand{\oproj}{\Pi^{\perp}}

\renewcommand{\bm}{}

\begin{document}
\title{Pseudo generators for under-resolved molecular dynamics}
\subtitle{What does the Smoluchowski equation tell us about the spatial dynamics of molecular systems?}

\renewcommand\Authfont{\fontsize{13}{14.4}\selectfont}
\renewcommand\Affilfont{\fontsize{11}{10.8}\itshape}

\author[1]{Andreas Bittracher\thanks{{bittrach@ma.tum.de}}}
\author[2]{Carsten Hartmann\thanks{{chartman@mi.fu-berlin.de}}}
\author[1]{Oliver Junge\thanks{{oj@tum.de}}}
\author[2]{P\'eter Koltai\thanks{{peter.koltai@fu-berlin.de}}}
\affil[1]{Fakult\"at f\"ur Mathematik, Technische Universit\"at M\"unchen, D-87548 Garching}
\affil[2]{Institut f\"ur Mathematik, Freie Universit\"at Berlin, D-14195 Berlin}

\date{\large\today}

\maketitle

\begin{abstract}
Many features of a molecule which are of physical interest (e.g. molecular conformations, reaction rates) are described in terms of its dynamics in configuration space. This article deals with the projection of molecular dynamics in phase space onto configuration space. Specifically, we study the situation that the phase space dynamics is governed by a stochastic Langevin equation and study its relation with the configurational Smoluchowski equation in the three different scaling regimes: Firstly, the Smoluchowski equations in non-Cartesian geometries are derived from the overdamped limit of the Langevin equation. Secondly, transfer operator methods are used to describe the metastable behaviour of the system at hand, and an explicit small-time asymptotics is derived on which the Smoluchowski equation turns out to govern the dynamics of the position coordinate (without any assumptions on the damping). By using an adequate reduction technique, these considerations are then extended to one-dimensional reaction coordinates. Thirdly, we sketch three different approaches to approximate the metastable dynamics based on time-local information only. 
\end{abstract} 
\section{Introduction}
\label{intro}

Langevin and Hamiltonian dynamics constitute established models for the analysis of biomolecular processes by classical molecular dynamics. While they describe the system at hand through the evolution of \emph{configuration} and \emph{momentum} coordinates, many properties of interest, such as metastable conformations, conformational transition rates or folding pathways, are merely characterized by the configurational dynamics or the dynamics of few collective variables, called \emph{reaction coordinates}, that span a low-dimensional submanifold of the configuration space (see, e.g., \cite{BeHu10,Din00,EVE04}). 

Both from a computational and modeling point of view it is very appealing to describe a molecular system just by its position (or reaction) coordinates, since this drastically reduces the dimensionality of the problem. Over decades, it has been of major interest to derive equations which govern the evolution of these coordinates either exactly~\cite{Zwa73,LuVE14}, or approximately with the smallest possible error~\cite{ChHaKu00,LeLe10}. One popular model for molecular dynamics in position space that comes under various names like \emph{overdamped Langevin dynamics},  \emph{Brownian dynamics}, \emph{Kramers equation} or Smoluchwski equation is obtained by the so-called Smoluchowski-Kramers approximation of the Langevin equation  \cite{Smo16,Kra40}; see also \cite{50yrs,Gar09} and the references therein. Yet it is unclear whether there are conditions beyond the asymptotic regime of the Kramers-Smoluchowski approximation (i.e.\ the high-friction limit), under which the Smoluchowski equation accurately captures e.g., the folding dynamics of a protein in terms of a one-dimensional reaction coordinate. For a general account of this topic we refer to \cite{100yrs}. 

 {  \paragraph{Aims and scope of this article.}
In this article, we discuss the accuracy of the Smoluchowski equation for the spatial dynamics of a molecular system under various parameter regimes where, in each case, our analysis departs from the \emph{Langevin equation} in phase space.\footnote{We use the terms \emph{spatial}, \emph{position(al)}, and \emph{configuration(al)} interchangeably, when referring to coordinates or dynamics.}  Our presentation of the topic is not claimed to be exhaustive; it rather reflects the authors' interests, and their wish to understand how the hierarchies of models used in molecular dynamics relate to each other. Parts of this article are based on the recent work \cite{BiKoJu14} by some of the authors, however, their analysis in the context of long time scales and in non-Cartesian geometries is new.  The main contribution of this article is that it sketches solutions to answer the following questions:}

\begin{enumerate}[(a)]
\item What is the appropriate generalization of the Smoluchowski equation in generalized (non-Cartesian) coordinates, to be used, for example, in reduced-order models of protein folding or polymers, and how is it related to a phase space description of a molecular system?

\item Is there a closed equation for the spatial dynamics on small time intervals if the underlying phase space dynamics is governed by a Langevin equation?

\item How well (and in which sense) is a system's metastable behaviour approximated by the Smoluchowski dynamics when the phase space dynamics is generated by Langevin dynamics? What are the time scale regimes on which the approximation of the metastable dynamics by the Smoluchowski equation may be used? 
\end{enumerate}

The manuscript is organized as follows: Section~\ref{sec:TrajectoryEnsembleViews} introduces the basic model of molecular dynamics in terms of deterministic and stochastic differential equations and describes an operator-based framework for the evolution of probability densities under these dynamics. This section also introduces the formulations of the stochastic equations in generalized coordinates in a non-Euclidean space. 
Section~\ref{sec:SpatialDynamicsMetastability} reviews the concept of metastability based on density fluctuations in position space and establishes a connection between Langevin and Smoluchowski dynamics on short time scales. A numerically exploitable scheme which replaces the complicated position space density transport by a rescaled Smoluchowski transport is described, along with asymptotic error estimates.
Section~\ref{sec:largetime} reviews the approximation quality of these methods, gives improved error estimates and discusses the extension to longer times scales. A summary and possible future directions are given in Section~\ref{discussion}.

\section{Trajectory- and ensemble-based views} \label{sec:TrajectoryEnsembleViews}


We consider a dynamical system described by $d=3n$ positional degrees of freedom that represent a system of $n$ particles. Let $\Qspace\subset\mathbb{R}^d$ denote the corresponding configuration space and $V(q)$ the potential energy of a given particle configuration $q\in\Qspace$ where we assume that the function $V\colon\Qspace\rightarrow \mathbb{R}$ is at least twice continuously differentiable, polynomially growing at infinity and bounded from below. 

\subsection{Models for molecular dynamics}
\label{ssec:models}

We introduce three typical models for molecular dynamics. The simplest model to describe the motion $(q_{t})_{t\ge 0}$ of the particles in vacuum, i.e.~without external influences like a solvent is given by \emph{Hamilton's equations}
\begin{equation}\label{hamiltondynamics}
\begin{aligned}
\frac{d\bm{q}}{dt}&=\nabla_{\bm p} H(\bm{q},\bm{p})\\
\frac{d\bm{p}}{dt}&=-\nabla_{\bm q} H(\bm{q},\bm{p})\,,
\end{aligned}
\end{equation}
where $p\in\Pspace=\mathbb{R}^d$ denotes the vector of conjugate particle momenta, and 
\begin{equation*}
H = \frac{1}{2}p\cdot M^{-1}p + V(q)
\end{equation*}
is the Hamiltonian (total energy) of the system, with $M={\rm diag}(m_{1},\ldots,m_{d})$ denoting the mass matrix. Depending on the type of system or when transformed to generalized coordinates, the mass matrix $M$ can be a general symmetric positive-definite, possibly position-dependent matrix.

In the presence of a heat bath or solvent, one typically adds a drift-diffusion term, arriving at the \emph{Langevin equation}, 
\begin{equation}\label{langevindynamics}
\begin{aligned}
\frac{d\bm{q}}{dt} &= \nabla_{\bm p} H(\bm{q},\bm{p})\\
\frac{d\bm{p}}{dt} &= -\nabla_{\bm q} H(\bm{q},\bm{p}) - \gamma\nabla_{\bm p} H(\bm{q},\bm{p}) + \sigma \zeta_t~.
\end{aligned}
\end{equation}
The term $-\gamma\nabla_{\bm p} H = -\gamma M^{-1}\bm{p}$, with $\gamma\in\R^{d\times d}$ being symmetric positive definite, models the drag through the solvent, the term $\sigma \zeta_t$ accounts for random collisions with the solvent particles~\cite[Chap.~4]{Nel63}. Here, $(\zeta_t)_{t\ge 0}$ is an uncorrelated, zero-mean white noise process that can be formally interpreted as the (generalized) derivative of a standard $d$-dimensional Brownian motion, and $\sigma\sigma^{T}\in\R^{d\times d}$ is the noise covariance matrix. In order to keep the system at a constant average kinetic energy, damping and excitation have to be balanced, which is ensured by assuming that noise and drag coefficients satisfy the fluctuation-dissipation relation 
\[
2\gamma=\beta\sigma\sigma^{T}\,,
\]
where $\beta>0$ is the inverse temperature in the system. Choosing $\gamma$ or $\sigma$ is a modelling issue and thus depends on the particular problem at hand. As we will see later on, both $\gamma$ and $\sigma$ may even be position dependent. 
%

If the friction in the system is uniformly large, i.e. $v\cdot\gamma v\gg v\cdot M v$ for all $v\in\R^{d}$, the Langevin equation can be replaced by the  \emph{Smoluchowski equation}
\begin{equation}\label{smoluchowskidynamics}
\gamma \frac{d\bm{q}}{dt} = -\nabla V(\bm{q}) + \sigma \zeta_t\,,
\end{equation}
or, using the common notation for It\^o stochastic differential equations (see \cite{Oks98}),
\begin{equation*}
\gamma d\bm{q}_{t} = -\nabla V(\bm{q}_{t})dt + \sigma d\bm{w}_{t}\,,
\end{equation*}
where $w_{t}$ is a standard Brownian motion in $\R^{d}$.\footnote{{  In the following, we will interpret stochastic differential equations such as (\ref{langevindynamics}) or (\ref{smoluchowskidynamics}) in the sense of It\^o, which implies that, for stochastic differentials such as $dq_{t}$, a generalized  chain rule known as \emph{It\^o's formula} or \emph{It\^o's Lemma} applies \cite[Theorem~4.2.1]{Oks98}.}} {  The Smoluchowski equation is also termed \emph{overdamped Langevin equation} and can be derived 
 from (\ref{langevindynamics}) by letting $v\cdot\gamma v\to\infty$ under a suitable rescaling of time; for a precise statement, see \cite[Theorem~10.1]{Nel63} or the derivation given below on pp.~\pageref{eps-1}--\pageref{convgHFL}.}\\

In some cases a description of the stochastic dynamics in a different coordinate system is needed. The aim of this subsection therefore is to derive the Smoluchowski equation in generalized coordinates. This is most conveniently done by resorting to the canonical  
form of the Langevin equation (\ref{langevindynamics}) that we will state first.

\paragraph{Langevin equation in generalized coordinates.}

To state the Langevin equation (\ref{langevindynamics}) in canonical form, we consider a diffeomorphism $\Phi\colon \Qspace'\to \Qspace$ between configuration spaces that has a cotangent lift $T^{*}\Phi\colon\Xspace\to\Xspace'$ given by
\[
(\bm{q},\bm{p})\mapsto \left(\Phi^{-1}(\bm{q}),\,((\nabla \Phi\circ\Phi^{-1})(\bm{q}))^{T}\bm{p}\right)\,.
\]
Using It\^o's formula \cite[Theorem~4.2.1]{Oks98}, the Langevin equation (\ref{langevindynamics}) can be written in the new configuration variables $\bm{u}=\Phi^{-1}(\bm{q})$ and their conjugate momenta $\bm{v}=((\nabla \Phi\circ\Phi^{-1})(\bm{q}))^{T}\bm{p}$: introducing the new (possibly position-dependent) drag and noise coefficients by
\begin{equation}\label{langevincoefficientsCan}
\tilde{\gamma} = \nabla \Phi^{T}\gamma \nabla \Phi\,,\quad \tilde{\sigma}=\nabla \Phi^{T}\sigma\,,
\end{equation}
the Langevin equation can be recast as \cite{HartDiss,HaYa13}
\begin{equation}\label{langevindyanamicsCan}
\begin{aligned}
\frac{d\bm{u}}{dt}  & = \nabla_{\bm{v}}\tilde{H}(\bm{u},\bm{v})\\
\frac{d\bm{v}}{dt}  & = -\nabla_{\bm{u}}\tilde{H}(\bm{u},\bm{v}) - \tilde{\gamma}(\bm{u})\nabla_{\bm{v}}\tilde{H}(\bm{u},\bm{v}) + \tilde{\sigma}(\bm{u})\zeta_t\,.
\end{aligned}
\end{equation}
Here $\tilde{H}$ denotes the push-forward of the Hamiltonian $H$ to the new coordinate system,  
\[
\tilde{H}(\bm{u},\bm{v}) = \frac{1}{2}\bm{v}\cdot (G(\bm{u}))^{-1}\bm{v} + \tilde{V}(\bm{u})\,,
\]
with $\tilde{V}=V\circ\Phi^{-1}$ and $G=\nabla\Phi^{T} M\nabla\Phi$ being the mass metric tensor induced by the transformation $\Phi$. 

It can be readily seen that, when the original drag and noise coefficients satisfy the fluctuation-dissipation relation, then so do the transformed coefficients:  
\begin{equation}\label{fdrelation}
2\tilde{\gamma}=\beta\tilde{\sigma}\tilde{\sigma}^{T}\,.
\end{equation}

\paragraph{Derivation of the Smoluchowski equation in generalized coordinates.}

It is now possible to derive the Smoluchowski equation in generalized coordinates from the 
canonical Langevin dynamics \eqref{langevindyanamicsCan} using formal asymptotics. 
To this end, let us scale the original drag and 
noise coefficients according to $\gamma\mapsto\gamma/\eps$ and 
$\sigma\mapsto\sigma/\sqrt{\eps}$ where $\eps>0$ is a small parameter. 
Clearly, the scaling preserves the fluctuation-dissipation relation (\ref{fdrelation}), and it leads to the Langevin equation
\begin{equation}\label{langevindynamicsEps}
\begin{aligned}
\frac{d\bm{u}}{dt}  & = \nabla_{\bm{v}}H(\bm{u},\bm{v})\\
\frac{d\bm{v}}{dt}  & = -\nabla_{\bm{u}}H(\bm{u},\bm{v}) - \frac{1}{\eps}\gamma(\bm{u})\nabla_{\bm{v}} H(\bm{u},\bm{v}) + \frac{1}{\sqrt{\eps}}\sigma(\bm{u})\zeta_t\,.
\end{aligned}
\end{equation}
For notational convenience, we have dropped the twiddle signs on the transformed Hamiltonian $H$ and the coefficients $\gamma$ and $\sigma$. 
To study the $\eps\to 0$ limit of (\ref{langevindynamicsEps}) we seek a 
perturbative expansion of the associated 
backward Kolmogorov equation\footnote{The Kolmogorov backward equation is a partial differential equation governing the evolution of observables. Specifically, if $(X_t)_{t\ge 0}$ is the solution of an It\^o stochastic differential equation, such as (\ref{langevindynamicsEps}), then, for any integrable function $\phi_0\colon\mathcal{X}\to\R$,
\[
\phi(x,t)=\mathbb{E}_x[\phi_0(X_t)]
\]
satisfies $\partial_t\phi = A\phi$ with initial condition $\phi(x,0)=\phi_{0}(x)$. Here $A$ is the infinitesimal generator associated with the process $(X_{t})_{t\ge 0}$, and $\mathbb{E}_x[\cdot]$ denotes the conditional expectation over all realizations of the process starting at $X_0 = x$. We introduce the dual concept of  \emph{forward equation} and the corresponding generator in Section~\ref{ssec:transferoperator}; cf.~also \cite[Section~7.3]{Oks98}.
}
\begin{eqnarray}\label{bweEps}
\partial_t \phi^\eps(\bm{u},\bm{v},t) = A_{\text{Lan}} \phi^\eps(\bm{u},\bm{v},t)\,,\quad \phi^\eps(\bm{u},\bm{v},0)=\phi_{0}(\bm{u},\bm{v})\,
\end{eqnarray}
following the approach described 
in \cite{Pap76,Pav08}.  To begin with, we notice that 
the backward operator $A_{\text{Lan}}$ in (\ref{bweEps}) admits the decomposition  (see also p.~\pageref{langevingenerator} below)
\begin{eqnarray*}
A_{\text{Lan}} = A_{\text{Ham}} + \frac{1}{\eps}A_{\text{OU}}\,,
\end{eqnarray*}
with 
\[
A_{\text{Ham}} = \nabla_{\bm{v}} H\cdot\nabla_{u} - \nabla_{\bm{u}}H\cdot\nabla_{v}
\]
and 
\[
A_{\text{OU}} = \frac{1}{2}\sigma\sigma^{T}\colon\nabla_{\bm{v}}^{2} -  (\gamma G^{-1}\bm{v})\cdot\nabla_{\bm{v}}
\]
We consider a perturbative solution of (\ref{bweEps}) that is of the form
\begin{eqnarray*}
\phi^\eps\,=\, \phi_{0} + \eps \phi_{1} + \eps^2 \phi_{2} + \ldots
\end{eqnarray*}
with $\phi_{i}=\phi_{i}(\bm{u},\bm{v	},t)$. Inserting the ansatz into the 
backward equation and equating powers of $\eps$ we obtain a hierarchy 
of equations, the first three of which read
\begin{eqnarray}
\label{eps-1}
 A_{\text{OU}}\phi_{0} &=& 0 \\ \label{eps0}
 A_{\text{OU}}\phi_{1} &=& \partial_{t} \phi_{0} -  A_{\text{Ham}}\phi_{0}\\ \label{eps1}
 A_{\text{OU}}\phi_{2}&=& \partial_{t} \phi_{1} -  A_{\text{Ham}}\phi_{1}\,.
\end{eqnarray}
Note that $A_{\text{OU}}$ is a second-order differential operator in $\bm{v}$ with $\bm{u}$ 
appearing only as a parameter. By the assumption that $\gamma(\cdot)$ is symmetric positive definite with uniformly bounded inverse, (\ref{eps-1}) implies that $\phi_{0}$ does not depend on $\bm{v}$.  
By a standard closure argument (a.k.a.~\emph{centering condition}), it thus follows that $\partial_{t}\phi_{0}=0$. 

Closely inspecting the resulting equations (\ref{eps0})--(\ref{eps1}), the next nontrivial term, $\phi_{1}$, is found to be the 
solution of the backward equation
\begin{equation}\label{bweEff}
\partial_t \psi= -\int_{\R^{d}} \left(A_{\text{Ham}}A_{\text{OU}}^{-1}A_{\text{Ham}}\psi\right)\varrho_{\bm{u}}(\bm{v})d\bm{v}\,,
\end{equation}
where $\psi=\psi(\bm{u},t)$ is independent of $\bm{v}$, and $\varrho_{\bm u}$ is the solution to $A^{*}_{\text{OU}}\varrho_{\bm{u}}=0$, with $A^{*}_{\text{OU}}$ being the formal $\mathcal{L}^{2}$ adjoint of $A_{\text{OU}}$. Equation (\ref{bweEff}) must be equipped with a suitable initial condition $\psi(\bm{u},0)=\psi_{0}(\bm{u})$. 

Before we evaluate the right hand side of (\ref{bweEff}), a few remarks are in order: 

\begin{enumerate}

\item The function $\varrho_{\bm{u}}$ in (\ref{bweEff}) is the unique invariant probability density \wrt the Lebesgue measure on the momentum space $\Pspace' = T^{*}_{\bm{u}}\Qspace'$ (that we can identify with $\R^{d}$) of the Ornstein--Uhlenbeck process generated by $A_{\text{OU}}$ . It is given by 
\begin{eqnarray}\label{rhou}
\varrho_{\bm{u}}(\bm{v}) = \left(\frac{\beta}{2\pi}\right)^{n/2} \left(\det(G(\bm{u}))\right)^{-1/2}
\exp\left(-\frac{\beta}{2}\bm{v} \cdot (G(\bm{u}))^{-1}\bm{v}\right)\,
\end{eqnarray}
and satisfies $A^{*}_{\text{OU}}\varrho_{\bm{u}}=0$. \\

\item The inverse of the operator  $A_{\text{OU}}$ in (\ref{bweEff}) is only unambiguously defined when it is acting on functions that are in the range of $A_{\text{OU}}$. {  By the Fredholm alternative (see \cite[Sections 3.12 \& 4.3]{Zei95}), the range of a linear continuous operator on some Banach space is the orthogonal complement of the kernel of its adjoint: 
\[
{\rm ran}(A_{\text{OU}}) = \left({\rm ker}(A_{\text{OU}}^{*})\right)^{\perp} = V_{0}\,,
\]
where
\[
V_{0}:=\left\{f\in \mathcal{L}^{2}_{\mu}(\Xspace')\colon \mathcal \int_{\Pspace}f(\bm{v}, \bm{u})\varrho_{\bm{u}}(\bm{v}) d\bm{v}=0\right\}\subset  \mathcal{L}^{2}_{\mu}(\Xspace')\,.
\]  
That is to say that the linear equation $A_{\text{OU}}\Phi = c$ has a solution, if and only if $c$ averages to zero under $\varrho_{u}$.
Note that $c=A_{\text{Ham}}\psi$ is linear in the momenta $\bm{v}$, hence $A_{\text{Ham}}\psi\in V_{0}$. As a consequence, $A_{\text{OU}}^{-1}$ in (\ref{bweEff}) is well defined.} \\
\end{enumerate}
{  
The formal expansion (\ref{eps-1})--(\ref{bweEff}) now suggests that the solution of the Langevin-based backward equation (\ref{bweEps}) and the solution to the limiting system (\ref{bweEff}) satisfy  
\begin{equation}\label{convgHFL}
\|\phi^{\eps}(\cdot,t) - \psi(\cdot,t/\eps) \|_{V} \to 0\,,\quad \eps\to 0\,.
\end{equation}
for some suitable norm on $V\subset  \mathcal{L}^{2}_{\mu}$. Indeed, standard results from homogenization theory for parabolic partial differential equations (e.g. \cite{Pap76,PaVe05}) imply that, under certain regularity assumptions on the coefficients, the convergence is uniform in $\Xspace'\times(0,T)$ for any finite $T$ (cf.~also \cite[Theorem~10.1]{Nel63}). }

As we show in the appendix, the operator on the right hand side of (\ref{bweEff}) reads
\begin{equation}\label{smoluchowskiGen}
\bar{A} = \beta^{-1}\tilde{\Delta} - {\nabla} V\cdot \tilde{\nabla}\,,
\end{equation}
where
\begin{equation*}
\tilde{\nabla}  = \gamma^{-1}  \nabla \quad\mbox{ and }\quad
\tilde{\Delta}=\frac{1}{\sqrt{\det \gamma}} \nabla\cdot
\left(\sqrt{\det \gamma}\, \gamma^{-1} \nabla\,\right)\,,
\end{equation*}
denote gradient and Laplace-Beltrami operator \wrt $\gamma$. 
The differential operator $\bar{A}$ has a straighforward 
interpretation as the infinitesimal (backward) generator of the Smoluchowski dynamics on the configuration space 
$\Qspace'\subset\Xspace'$, with the position dependent drag matrix acting as metric 
tensor on $\Qspace'$. 
Alternatively, one may regard $\bar{A}$ as the generator of the Smoluchowski 
dynamics on a Riemannian manifold $\Qspace'$ endowed with the metric tensor 
$h=\nabla\Phi^{T}\nabla\Phi$ and a position dependent friction matrix $\gamma$.  
Our findings are summarized in the next lemma.

\begin{lemma}
The Smoluchowski equation in generalized coordinates reads 
\begin{equation}\label{smoluchowskidynamicsGen}
\gamma(\bm{u}) \frac{d\bm{u}}{dt} =  - \nabla V(\bm{u}) + g(\bm{u}) 
+ \sigma(\bm{u}) \zeta_t\,,\;\;\bm{u}_{0}=\bm{u}\,,
\end{equation}
where $g=(g_{1},\ldots,g_{n})$ has the entries 
\begin{eqnarray}\label{ito}
g_i = \frac{1}{\beta}\sum_{j,k}\gamma_{ij}\frac{1}{\sqrt{\det \gamma}}\frac{\partial}{\partial u_k}
\left(\sqrt{\det \gamma}\,\gamma^{kj}\right)\,.
\end{eqnarray}
\end{lemma}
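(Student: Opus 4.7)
The plan is to read off the SDE \eqref{smoluchowskidynamicsGen} directly from the generator $\bar A$ displayed in \eqref{smoluchowskiGen} by expanding it into the standard Itô generator form
\[
\bar A \phi \;=\; b(\bm u)\cdot\nabla \phi \;+\; \tfrac{1}{2}\,\mathrm{tr}\!\bigl(a(\bm u)\,D^{2}\phi\bigr),
\]
and matching $a$ and $b$ to the drift/diffusion coefficients of a candidate SDE. Once this is done, the well-known bijection between the Itô generator and the SDE (see e.g.\ \cite[Theorem~7.3.3]{Oks98}) identifies the dynamics uniquely.

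First I would expand the Laplace–Beltrami part. Applying the product rule to
\[
\tilde\Delta\phi \;=\; \frac{1}{\sqrt{\det\gamma}}\sum_{k}\frac{\partial}{\partial u_{k}}\!\left(\sqrt{\det\gamma}\sum_{j}\gamma^{kj}\frac{\partial\phi}{\partial u_{j}}\right),
\]
separates the pure second-order part $\sum_{j,k}\gamma^{kj}\partial_{k}\partial_{j}\phi$ from the first-order remainder $\sum_{j}c_{j}(\bm u)\,\partial_{j}\phi$, where
\[
c_{j}(\bm u) \;=\; \sum_{k}\frac{1}{\sqrt{\det\gamma}}\frac{\partial}{\partial u_{k}}\!\left(\sqrt{\det\gamma}\,\gamma^{kj}\right).
\]
Combined with $-\nabla V\cdot\tilde\nabla\phi=-\sum_{j}(\gamma^{-1}\nabla V)_{j}\,\partial_{j}\phi$, this yields
\[
\bar A\phi \;=\; \tfrac{1}{2}\sum_{j,k}\underbrace{2\beta^{-1}\gamma^{kj}}_{a_{jk}}\,\partial_{j}\partial_{k}\phi \;+\; \sum_{j}\underbrace{\bigl(\beta^{-1}c_{j}-(\gamma^{-1}\nabla V)_{j}\bigr)}_{b_{j}}\,\partial_{j}\phi.
\]

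Next I would match $a$ and $b$ to the SDE of the lemma. Rewriting \eqref{smoluchowskidynamicsGen} in the Itô form $d\bm u=\gamma^{-1}(-\nabla V+g)\,dt+\gamma^{-1}\sigma\,dw_{t}$, the diffusion matrix equals $(\gamma^{-1}\sigma)(\gamma^{-1}\sigma)^{T}=\gamma^{-1}\sigma\sigma^{T}\gamma^{-1}$, which by the fluctuation–dissipation relation $\sigma\sigma^{T}=2\beta^{-1}\gamma$ reduces to $2\beta^{-1}\gamma^{-1}=a$, confirming the diffusion part. For the drift I must verify $\gamma^{-1}(-\nabla V+g)=b$, equivalently $(\gamma^{-1}g)_{j}=\beta^{-1}c_{j}$. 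Substituting the candidate expression \eqref{ito} for $g_{i}$, contracting with $\gamma^{ji}$, and using $\sum_{i}\gamma^{ji}\gamma_{il}=\delta_{jl}$ collapses the double sum to exactly $\beta^{-1}c_{j}$. Hence $a$ and $b$ agree and the SDE \eqref{smoluchowskidynamicsGen} generates $\bar A$.

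The genuinely non-trivial step is the index bookkeeping that produces the precise form of $g_{i}$: one must recognize that turning the divergence-form generator into an SDE generator introduces a \emph{Stratonovich-to-Itô–type} correction whose components must be pulled back through $\gamma$ (not $\gamma^{-1}$) so that, after multiplication by $\gamma$ on both sides of the SDE, the correction appears additively next to $-\nabla V$. Aside from this contraction, no analytic difficulty remains, since uniqueness of the generator–SDE correspondence under our standing regularity assumptions on $\gamma$, $\sigma$, and $V$ ensures the SDE one reads off is the only one compatible with $\bar A$.
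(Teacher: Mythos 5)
Your proposal is correct and follows essentially the same route as the paper: the paper derives $\bar A$ in the appendix and then states the SDE as a direct reading-off of the generator, which is exactly the matching of drift and diffusion coefficients that you carry out explicitly (the product-rule expansion of $\tilde\Delta$, the diffusion matrix $2\beta^{-1}\gamma^{-1}$ via the fluctuation--dissipation relation, and the contraction $\sum_i\gamma^{ji}\gamma_{il}=\delta_{jl}$). The only minor looseness is calling the first-order remainder a ``Stratonovich-to-It\^o--type'' correction---it is simply the drift produced by expanding a divergence-form operator into non-divergence form, as the paper's own remark notes---but this does not affect the validity of the argument.
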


\begin{remark}
The additional drift term $g$ in the Smoluchowski equation is due to the geometry of the configuration manifold $\Qspace'$ and the  
interpretation of the Smoluchowski equation in the sense of It\^o. Formally, it can be seen to be related with the first 
order derivative in the expression for the  Laplace Beltrami operator in (\ref{smoluchowskiGen}). 
\end{remark}

\begin{remark}
The Smoluchowski equation (\ref{smoluchowskidynamicsGen}) in generalized coordinates likewise follows by transforming the original Smoluchowski 
equation (\ref{smoluchowskidynamics}) in Cartesian coordinates to the new coordinate system using It\^o's Lemma \cite[Theorem~4.2.1]{Oks98}. As a consequence, the stochastic convergence of the spatial component of the high friction Langevin equation to the solution of the (time-rescaled) Smoluchowski equation that has been proved in~\cite[Theorem~10.1]{Nel63} should be inherited by its non-Cartesian counterpart.     
\end{remark}

\subsection{The transfer operator}\label{ssec:transferoperator}

We shall now examine how probability densities evolve under the Langevin or the Smoluchowski dynamics. To this end, call $\bm{x}=(\bm{q},\bm{p})$ or $\bm{x}=\bm{q}$ the state vector, depending on which type of dynamics is considered, and $\Xspace=\Qspace\times\Pspace$ or $\Xspace=\Qspace$ the phase space or state space, respectively. For any given $\bm{x}_0\in\Xspace$, we seek the probability density $f_t$ of $\bm x_t$ for some $t>0$ \wrt the natural (Liouville or Lebesgue) measure on $\Xspace$. Now let $B\subset\Xspace$ be any measurable subset of $\Xspace$, and define the stochastic transition function\footnote{It is common to denote the transition function and transition probabilities by~$p$. We hope that the clash in the notation with the conjugate momenta is not going to confuse the reader, since the transition function and -probabilities are always going to be functions of three variables.} of the dynamics $(x_{t})_{t\ge 0}$ by 
\begin{equation}\label{transitionkernel}
p(t,x,B) = \mathrm{Prob}\left[ \bm{x}_t\in B\,\vert\, \bm{x}_0=x\right],
\end{equation}
Further call  
\begin{equation}\label{transitionprobab}
p_{\mu}(t,A,B) := \mathrm{Prob}_{\mu}\left[\bm x_t\in B\,\vert\,\bm x_0\in A\right]
\end{equation}
the \emph{transition probability} between the measurable sets $A\subset\Xspace$ and $B\subset\Xspace$, where $\mathrm{Prob}_{\mu}$ indicates that the initial condition is distributed according to a probability measure~$\mu$. For the long term macroscopic behaviour of the system, sets $A$ play an important role for which~$p_{\mu}(t,A,A)\approx 1$ for some physically relevant measure~$\mu$ and some characteristic times~$t>0$.

The transition probability $p(t,x,B)$ can be derived from a stochastic transition kernel or \emph{transition density} $\Psi$ via 
\begin{equation}\label{stochastickernel}
	p(t,x,B) = \int_B\Psi(t,x,y)\,dy, 
\end{equation}
where $\Psi$ is the fundamental solution of the Fokker-Planck equation (\ref{fokkerplanckequationito}). Existence and uniqueness of the transition density $\Psi$ follow relatively under mild conditions (see e.g.~\cite[Chap.~11.7]{Las94}).

Now let an initial density $f_0=d\mu/d\bm{x}$ be given.  The density $f_t$ describing the distribution of the system at time $t>0$ is then given by 
\begin{equation}\label{stochtransport}
f_t(y) = \int_\Xspace f_0(x)\Psi(t,x,y)\,d\mu(x)
\end{equation}
Equation (\ref{stochtransport}) can be seen as the definition of the \emph{transfer operator with lag time~$t$}: 
\[
\op{t} f_{0}(x) := f_t(x).
\]
By linearity we can extend the definition of~$\op{t}$ from probability densities to arbitrary integrable functions, and it will be convenient in what follows to consider the transfer operator as a family of linear maps $\op{t}\colon \mathcal{L}^{1}_{\mu}(\Xspace)\to  \mathcal{L}^{1}_{\mu}(\Xspace)$. This  family of linear operators has the \emph{Chapman-Kolmogorov} (or \emph{semigroup}) property:
\begin{enumerate}[(i)]
\item $\op{0} f=f$,
\item $\op{t+s}f=\op{t}\big(\op{s}f\big)$ for all $s,t\ge0$.
\end{enumerate}
We also have non-expansiveness in the induced operator norm, $\|\op{t}\| \le 1$, and positivity, $\op{t}f\ge 0$ for $f\ge 0$.

The transition probabilities (\ref{transitionprobab}) can be conveniently expressed in terms of the transfer operator. If we define the scalar product on the space $\mathcal{L}^{2}_{\mu}(\Xspace)$ of square integrable functions by 
$$
\langle f,g\rangle_\mu=\int_\Xspace f(x) g(x)\,d\mu(x)\,,
$$
then, for any measurable set $A$ with $\mu(A)>0$,  
\[
p_{\mu}(t,A,B) = \frac{1}{\mu(A)}\int_B \op{t}\chi_{A}\,d\mu = \frac{1}{\mu(A)}\int_\Qspace \op{t}\chi_{A}\chi_{B}\,d\mu=\frac{\langle P^t\chi_A,\chi_B\rangle_{\mu}}{\langle \chi_A,\chi_A\rangle_{\mu}}
\]
with~$\chi$ being the indicator function.

\paragraph{The forward generator.}

The semigroup property means that~$\op{t}$ is ``memoryless'', i.e.\ that \eqref{hamiltondynamics}--\eqref{smoluchowskidynamics} generate a Markov process. Noting that  
\[
\op{t} = \left(\op{t/n}\right)^n\,,
\] 
we may conclude all relevant information about the density transport is already contained in $P^\tau$ for \emph{arbitrarily small} $\tau$.
This is formalized by looking at the operator 
\begin{equation}\label{generator}
\gen f=\lim_{\tau\rightarrow0}\frac{\op{\tau}f-f}{\tau}
\end{equation}
that is defined for all $f$, for which the limit exists. $\gen$ is called the \emph{forward generator} or \emph{infinitesimal generator} of $\op{t}$. 

From its general form for arbitrary It\^o diffusions \cite[p.~282]{Kar91}, we can derive the generator for our dynamics. For the Hamiltonian dynamics \eqref{hamiltondynamics} and functions $f\in C^{1}(\Xspace)$, where $C^{1}$ is equipped with the supremum norm, the operator $L$ is given by
\begin{equation}\label{liouvillian}
L_{\text{Ham}} =  \nabla_{q}H\cdot\nabla_{p} - \nabla_{p}H\cdot \nabla_{q}\,,
\end{equation}
where the dot denotes the Euclidean inner product, and $\nabla_q$, $\nabla_{p}$ are the gradients with respect to~$q$ or $p$. 
In case of Langevin dynamics (\ref{langevindynamics}) and $f\in C^{2}(\Xspace)$, we have 
\begin{equation}\label{langevingenerator}
L_{\text{Lan}} = L_{\text{Ham}} + \frac{1}{2}\sigma\sigma^{T}\colon\nabla_{p}^{2}  + \gamma\nabla_{p}H\cdot\nabla_{p} + \gamma\colon\nabla_{p}^{2}H\,,
\end{equation}
where the notation $A\colon B:=\tr(A^{T}B)$ denotes the inner product between matrices $A,B\in\R^{d\times d}$, and $\nabla_{p}^{2}$ denotes the matrix of second derivatives \wrt  $p$.    
Finally, the generator of the Smoluchowski dynamics (\ref{smoluchowskidynamics}) reads
\begin{equation}\label{smoluchowskigenerator}
L_{\text{Smol}} = \beta^{-1}\gamma^{-1}\colon\nabla_{q}^{2} + \left(\gamma^{-1}\nabla_{q}V\right)\cdot\nabla_{q} + \gamma^{-1}\colon\nabla_q^{2}V\,,
\end{equation}
with $\nabla_{q}^{2}$ being the matrix of second derivatives in $q$, and we have used that $2\gamma=\beta\sigma\sigma^{T}$.

\paragraph{Fokker-Planck equations and invariant measures.}

By definition of the forward generator, the evolution of probability densities $f_{t}$ associated with any of the stochastic dynamics \eqref{langevindynamics}--\eqref{smoluchowskidynamics} is described by a parabolic transport equation of the form  
\begin{equation}\label{fokkerplanckequationito}
\partial_tf_t = L  f_t\,,\quad f_{t=0}(x)=g(x)\,,
\end{equation}
that are called \emph{Kolmogorov forward equations} or \emph{Fokker--Plack equations}~\cite{Kar91}, with $L$ being either $L_{\text{Lan}}$ or $L_{\text{Smol}}$. When $\gamma=0$, then the Fokker-Planck equation with $L_{\text{Lan}}$ turns into the Liouville equation that describes the transport of probability densities under the Hamiltonian dynamics (\ref{hamiltondynamics}). 


Probability measures that are invariant under the dynamics play a prominent role. The corresponding densities are fixed points of $\op{t}$ for any $t\ge0$, and equation~\eqref{generator} implies that they lie in the kernel of~$\gen$. For the stochastic processes considered here, the invariant density can be shown to be unique (cf.~\cite{MaSt02}). For the Langevin dynamics (\ref{langevindynamics}), the unique invariant probability density is the  \emph{canonical density} 
\begin{equation}\label{canonicaldensity}
\begin{aligned}
\fcan(q,p) & = \frac{1}{Z}\exp\left(-\beta H(q,p)\right)\\
& =\underbrace{\frac{1}{Z_\Pspace}\exp\left(-\frac{\beta}{2}p\cdot M^{-1} p\right)}_{=:f_\Pspace(p)}~\underbrace{\frac{1}{Z_\Qspace}\exp\left(-\beta V(q)\right)}_{=:f_\Qspace(q)},
\end{aligned}
\end{equation}
with $Z=Z_{\Pspace}Z_{\Qspace}$ and 
\[
Z_\Pspace=\int_\Pspace\exp\left(-\frac{\beta}{2}p\cdot M^{-1} p\right)dp\,,\quad  Z_\Qspace=\int_\Pspace\exp\big(-\beta V(q)\big)dq\,.
\] 
For the Smoluchowski dynamics (\ref{smoluchowskidynamics}), the unique invariant measure has the density $f_{\Qspace}(q)$, which is called the \emph{Boltzmann density} or \emph{Gibbs-Boltzmann density}. (We assume throughout that $\exp(-\beta V)$ is integrable).

Under fairly mild assumptions on the potential $V$, the invariant densities can be shown to be the unique asymptotically stable fixed point of $P^{t}$, which implies that $P^{t}f_{0}$ converges to the stationary distribution for any initial density $f_0$ \cite{MaStHi02}. The Liouville equation associated with the Hamiltonian dynamics (\ref{hamiltondynamics}) is known to have infinitely many stationary solutions, among which is $\fcan$.   

\begin{remark}

It can be readily seen that the Smoluchowski dynamics (\ref{smoluchowskidynamicsGen}) in generalized coordinates $\bm{u}\in\Qspace'$
has the unique invariant probability measure 
\begin{eqnarray}\label{smoluchowskimeasure}
d\rho(\bm{u}) = (f_{\Qspace}\circ\Phi)(\bm{u}) \,d\Sigma(\bm{u})\,,
\end{eqnarray}
with 
\[
d\Sigma(\bm{u})=\sqrt{\det h(\bm{u})}\,d\bm{u}
\]
being the 
Riemannian volume element on $\Qspace'$ where $h(\bm{u})=(\nabla\Phi^{T}\nabla\Phi)(\bm{u})$ is the corresponding metric tensor. Note that (\ref {smoluchowskimeasure}) is simply the pullback of the Boltzmann distribution in Cartesian coordinates by the coordinate transformation $\Phi$. As a consequence, $d\rho/d\bm{u}$ is the $\Qspace'$-marginal of the canonical density $f_{\text{can}}$. To see this, replace the metric tensor $h$ on $\Qspace'$ by the generalized mass matrix  $G=\nabla\Phi^{T}M\nabla\Phi$ or the corresponding expression for the friction coefficient $\gamma$. This does not change the invariant measure as the constant mass or drag matrices cancel out.


\end{remark}

\subsection{More on semigroups and their generators}

Before we proceed, let us recall two results relating the transfer operator semigroup and its generator. For our purposes the main connection between them is given by the following
\begin{theorem}[Spectral mapping theorem~\cite{Paz83}]\label{spectralmappingtheorem} Let $\mathcal{X}$ be a Banach space, $T^t:\mathcal{X}\to\mathcal{X}$, $t\ge0$, a $C_0$ semigroup of bounded linear operators (i.e.\ $T^tf\to f$ as $t\to0$ for every $f\in\mathcal{X}$, and~$T^t$ bounded for every~$t$), and let $A$ be its infinitesimal generator. Then
$$
e^{t\sigma_{\bullet}(A)}\subset \sigma_{\bullet}(T^t)\subset e^{t\sigma_{\bullet}(A)}\cup \{0\},
$$
with $\sigma_{\bullet}$ denoting the point spectrum. The corresponding eigenvectors are identical.
\end{theorem}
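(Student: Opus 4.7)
The plan is to prove the two inclusions separately, with the statement about identical eigenvectors falling out of the first one. For $e^{t\sigma_\bullet(A)} \subset \sigma_\bullet(T^t)$, I would take $\lambda \in \sigma_\bullet(A)$ and $f \neq 0$ with $A f = \lambda f$. The orbit $u(s) := T^s f$ lies in $\Dom{A}$ and satisfies the abstract Cauchy problem $u'(s) = A u(s) = \lambda u(s)$ with $u(0) = f$, whose unique solution is $u(s) = e^{\lambda s} f$. Setting $s = t$ gives $T^t f = e^{\lambda t} f$, so $e^{t\lambda} \in \sigma_\bullet(T^t)$ with $f$ itself as eigenvector, which already establishes the last sentence of the theorem.

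For the reverse inclusion I would use a Fourier-averaging trick. Let $\mu \in \sigma_\bullet(T^t) \setminus \{0\}$ with $T^t g = \mu g$, $g \neq 0$, and fix any $\lambda \in \mathbb{C}$ with $e^{\lambda t} = \mu$. For each $k \in \mathbb{Z}$ set $\lambda_k = \lambda + 2\pi i k / t$ so that $e^{\lambda_k t} = \mu$ as well, and define the Bochner integrals
\[
g_k := \int_0^t e^{-\lambda_k s} T^s g \, ds,
\]
which exist by strong continuity of $(T^s)$. Computing $T^h g_k$ via the substitution $s \mapsto s+h$ splits the integral into a bulk part on $[0,t]$ plus boundary corrections on $[0,h]$ and $[t,t+h]$; on the latter interval $T^{t+r}g = \mu T^r g$ together with $e^{-\lambda_k t}\mu = 1$ makes the two corrections cancel, yielding $T^h g_k = e^{\lambda_k h} g_k$ for every $h \geq 0$. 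Differentiating at $h = 0^+$ then gives $g_k \in \Dom{A}$ and $A g_k = \lambda_k g_k$, so every non-zero $g_k$ produces an eigenvalue $\lambda_k \in \sigma_\bullet(A)$ whose exponential is $\mu$.

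The main obstacle, and the only non-routine step, is to guarantee that at least one $g_k$ is non-zero; without this, the construction produces no eigenvector of $A$ at all. Suppose for contradiction that $g_k = 0$ for every $k \in \mathbb{Z}$. For an arbitrary $\phi \in \mathcal{X}^*$ the continuous scalar function $h_\phi(s) := e^{-\lambda s}\phi(T^s g)$ on $[0,t]$ satisfies $\int_0^t e^{-2\pi i k s/t} h_\phi(s)\, ds = \phi(g_k) = 0$ for all $k$. Completeness of the Fourier system on $[0,t]$ forces $h_\phi \equiv 0$, hence $\phi(T^s g) = 0$ for every $s \in [0,t]$ and every $\phi \in \mathcal{X}^*$; Hahn--Banach then yields $T^s g \equiv 0$ on $[0,t]$, contradicting $T^0 g = g \neq 0$. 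Picking any $k_0$ with $g_{k_0} \neq 0$ therefore gives $\lambda_{k_0} \in \sigma_\bullet(A)$ with $e^{t\lambda_{k_0}} = \mu$, which completes the second inclusion.
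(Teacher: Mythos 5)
Your proof is correct. Note that the paper does not prove this theorem but simply cites it from Pazy~\cite{Paz83}; the argument you give---the ODE argument $\tfrac{d}{ds}\bigl(e^{-\lambda s}T^s f\bigr)=0$ for the easy inclusion, and for the converse the family of integrals $g_k=\int_0^t e^{-\lambda_k s}T^s g\,ds$ with $\lambda_k=\lambda+2\pi ik/t$ together with the Fourier-completeness / Hahn--Banach step to show some $g_k\neq 0$---is essentially the standard proof found there and in Engel--Nagel.
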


Evidently, a function $f$ is an invariant density of $\op{t}$ for all $t\geq0$, if and only if $\gen f=0$. Further, since $\|\op{t}\|\leq 1$, the eigenvalues of $\gen$ lie in the left complex half-plane.  The family $\op{t}$ can be approximated by a truncated ``Taylor series'':
\begin{proposition}[\cite{BiKoJu14}]\label{truncatedtaylor}
If $f$ is $2N+2$ times continuously differentiable and $L^nf$, $n=0,1,\ldots,N$, is square-integrable with respect to~$\mu$, then 
$$
\Big\|\op{t}f-\sum_{n=0}^N\frac{t^n}{n!}\gen^{n}f\Big\|_{\mathcal{L}^2_\mu} = \mathcal{O}(t^{N+1}) ~~ \text{for $t\rightarrow 0$},
$$
where $\mathcal{L}^2_\mu$ denotes the $\mathcal{L}^2$-norm with respect to~$\mu$.
\end{proposition}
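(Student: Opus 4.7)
The plan is to establish a Taylor expansion for the semigroup $\op{t}$ in $\mathcal{L}^2_\mu$ by iterating the defining integral identity of the generator and bounding the resulting remainder with non-expansiveness. The starting point is the fundamental relation: for any $g$ in the domain of $\gen$,
\begin{equation*}
\op{t}g - g = \int_0^t \op{s}\gen g\,ds,
\end{equation*}
which follows from the definition (\ref{generator}) together with the semigroup property, giving $\frac{d}{ds}\op{s}g = \op{s}\gen g$.

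Applying this identity recursively with $g=\gen^k f$ for $k=0,1,\ldots,N$ and substituting successively into the previous expression produces the telescoped expansion
\begin{equation*}
\op{t}f = \sum_{n=0}^N \frac{t^n}{n!}\gen^n f + R_N(t),\qquad R_N(t)=\int_0^t \frac{(t-s)^N}{N!}\op{s}\gen^{N+1}f\,ds,
\end{equation*}
where the closed-form remainder is obtained by Fubini from the simplex integral $\int_{0\le s_{N+1}\le\cdots\le s_1\le t}\op{s_{N+1}}\gen^{N+1}f\,ds_{N+1}\cdots ds_1$. Applying the triangle inequality under the integral and using the $\mathcal{L}^2_\mu$-non-expansiveness $\|\op{s}\|\le 1$ (stated earlier in the text) yields
\begin{equation*}
\|R_N(t)\|_{\mathcal{L}^2_\mu}\;\le\;\frac{t^{N+1}}{(N+1)!}\,\|\gen^{N+1}f\|_{\mathcal{L}^2_\mu},
\end{equation*}
which is $\mathcal{O}(t^{N+1})$ as $t\to 0$.

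The main obstacle is purely regularity-theoretic rather than analytic. Since $\gen$ is a second-order differential operator with smooth coefficients (built from $V$, $\gamma$, $M$), $(2N+2)$-fold classical differentiability of $f$ ensures that $\gen^{k}f$ are well defined pointwise for $k\le N+1$; what one must additionally verify is that $f$ lies in the \emph{operator-theoretic} domain $\Dom{\gen^{N+1}}\subset\mathcal{L}^2_\mu$, so that the identity $\frac{d}{ds}\op{s}\gen^k f=\op{s}\gen^{k+1}f$ is valid in $\mathcal{L}^2_\mu$ and Fubini can be applied. This is handled either by (i) strengthening the hypothesis to $\gen^{N+1}f\in\mathcal{L}^2_\mu$ — consistent with the $C^{2N+2}$ hypothesis once polynomial growth of the coefficients and their derivatives is controlled against the Gaussian-type tails of $\mu$ — or (ii) a density argument mollifying $f$ within a common core of $\gen^k$ for $k\le N+1$, applying the standard semigroup Taylor formula to the mollification, and passing to the $\mathcal{L}^2_\mu$-limit using contractivity of $\op{s}$. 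Beyond this regularity bookkeeping the proof reduces to one Fubini exchange and the contractivity estimate above.
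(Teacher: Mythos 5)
Your argument is the textbook Taylor expansion with integral remainder for $C_0$ semigroups, obtained from iterating $\op{t}g-g=\int_0^t\op{s}\gen g\,ds$ and bounding the remainder by contractivity. The present paper does not prove the proposition at all — it merely cites \cite{BiKoJu14} — so there is no in-text proof to compare against; but this is precisely the argument one expects there, and your derivation of $R_N(t)=\int_0^t\frac{(t-s)^N}{N!}\op{s}\gen^{N+1}f\,ds$ and the bound $\|R_N(t)\|_{\mathcal{L}^2_\mu}\le\frac{t^{N+1}}{(N+1)!}\|\gen^{N+1}f\|_{\mathcal{L}^2_\mu}$ are correct as stated.

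The one genuine issue is exactly the one you flag, and you should be firmer about it. The estimate requires $\gen^{N+1}f\in\mathcal{L}^2_\mu$, whereas the proposition's hypothesis only asserts $\gen^n f\in\mathcal{L}^2_\mu$ for $n\le N$; the $C^{2N+2}$ condition gives pointwise well-definedness of $L^{N+1}f$ but not square-integrability against $\mu$ without an additional growth condition. With only $\gen^N f\in\mathcal{L}^2_\mu$, writing the order-$(N-1)$ remainder form
\begin{equation*}
\op{t}f-\sum_{n=0}^N\frac{t^n}{n!}\gen^n f=\int_0^t\frac{(t-s)^{N-1}}{(N-1)!}\bigl(\op{s}\gen^Nf-\gen^Nf\bigr)\,ds
\end{equation*}
and invoking strong continuity yields only $o(t^N)$; the rate $\mathcal{O}(t^{N+1})$ is equivalent to $\gen^N f\in\Dom{\gen}$. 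So your remedy (i) — reading the hypothesis as $n=0,\ldots,N+1$, or supplying a growth condition that makes $L^{N+1}f\in\mathcal{L}^2_\mu$ — is the correct fix and is presumably the intent of \cite{BiKoJu14}. Your remedy (ii) does not stand on its own: the mollified remainder is bounded by $\frac{t^{N+1}}{(N+1)!}\|\gen^{N+1}f_\ep\|_{\mathcal{L}^2_\mu}$, and passing to the limit requires $f_\ep\to f$ in the $\gen^{N+1}$-graph norm, which again presupposes $\gen^{N+1}f\in\mathcal{L}^2_\mu$. Option (ii) therefore collapses to (i) and should not be presented as an alternative route.
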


\section{Spatial dynamics and metastability} \label{sec:SpatialDynamicsMetastability}

{  
Consider an infinite number of systems modeled by (\ref{langevindynamics}) in thermodynamic equilibrium, i.e.\ identically and independently distributed according to $\fcan$ (this collection of systems is called an \emph{ensemble} in statistical mechanics). We are now interested in the portion of these systems which undergo a certain configurational change, i.e.\ leave a subset $A\subset\Qspace$ and enter another subset~$B\subset\Qspace$. For this, we track the evolution of~$\chi_A\fcan$, which is given by $P^t(\chi_A\fcan)$. This will be the starting point of the subsequent analysis. }

\paragraph{The spatial transfer operator. }
Since we are only interested in the distribution of their configurations at time $t\ge 0$, we compute the marginal with respect to $q$.  The resulting \emph{spatial transfer operator} for some~$u\in\mathcal{L}^2_{f_{\Qspace}}$ is \cite{Sch99,Web12}
\begin{equation} \label{spatialto}
S^tu(q):=\frac{1}{\fQ(q)}\int_\Pspace\opl{t}\big(u(q)\fcan(q,p)\big)~dp.
\end{equation}

\paragraph{Metastability on configuration space.} Using the scalar product
$$
\langle u,v\rangle_{\fQ}:=\int_\Qspace u(q)v(q)\fQ(q) dq
$$
(which gives rise to the norm~$\|\cdot\|_{\mathcal{L}^2_{f_{\Qspace}}}$), and the ``slice'' $\Gamma(A) :=\big\{(q,p)\in\Qspace~|~q\in A\big\}$ in state space, we define transition probabilities between slices via
\begin{equation}
\label{slicetransitionprob}
 p\left(t,\Gamma(A),\Gamma(B)\right)=\frac{\langle S^t\chi_A,\chi_B\rangle_{\fQ}} {\langle\chi_A,\chi_A\rangle_{\fQ}}.
\end{equation}
We call a disjoint union $A_1\cup\ldots\cup A_n=\Qspace$ of position space \emph{metastable} or \emph{almost invariant} if
$$
p\left(t,\Gamma(A_j),\Gamma(A_j)\right)\approx 1,~j=1,\ldots,n
$$
for the time scales~$t>0$ of interest. Other, more sophisticated, notions of metastability can be found in~\cite{SchSa13}.

The link between almost invariant/metastable sets and eigenvalues close to one and the corresponding eigenvectors of some transfer operator was first established in \cite{Del99} and used for conformation dynamics in \cite{Deu96}.  We here cite an extension to a broader class of transfer operators from \cite{Hui05}. 

\begin{theorem}[Application of \cite{Hui05}, Theorem 2]\label{metastability}
Let $\sigma(S^t)\subset [a,1]$ with $a>-1$ and \mbox{$\lambda_n\leq\ldots\leq\lambda_2<\lambda_1=1$} be the $n$ largest eigenvalues of $S^t$, with eigenvectors $v_n,\ldots,v_1$.
Let $\{A_1,\ldots,A_n\}$ be a measurable partition of $\Qspace$ and $\Pi$ be the orthogonal projection onto $\operatorname{span}(\chi_{A_1},\ldots,\chi_{A_n})$.  Then
$$
1+\rho_2\lambda_2 +\cdots+\rho_n\lambda_n + c \leq p(t,\Gamma(A_1),\Gamma(A_1))+\cdots + p(t,\Gamma(A_n),\Gamma(A_n)) \leq 1+\lambda_2+\cdots+\lambda_n,
$$
where $\rho_j=\| \Pi v_j\|\in [0,1]$ and $c= a(1-\rho_2+\ldots+1-\rho_n)$.
\end{theorem}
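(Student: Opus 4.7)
The plan is to reduce the sum of self-transition probabilities to the trace of the spatial transfer operator compressed to the $n$-dimensional subspace $\operatorname{range}(\Pi)$, and then to exploit a spectral decomposition of $S^t$ to compare that trace with the top eigenvalues of $S^t$. Concretely, let $\mu_j := \langle \chi_{A_j},\chi_{A_j}\rangle_{\fQ}$ and note that $e_j := \chi_{A_j}/\sqrt{\mu_j}$, $j=1,\ldots,n$, form an orthonormal basis of $\operatorname{range}(\Pi)$, so by definition (\ref{slicetransitionprob})
\begin{equation*}
\sum_{j=1}^{n} p(t,\Gamma(A_j),\Gamma(A_j)) \;=\; \sum_{j=1}^{n}\langle S^t e_j,e_j\rangle_{\fQ} \;=\; \operatorname{tr}\bigl(\Pi S^t \Pi\bigr).
\end{equation*}

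Next, I would assume (as is standard in the Huisinga-type setting and as is inherited from detailed balance of the Smoluchowski dynamics, respectively from the momentum-reversal symmetry of Langevin dynamics) that $S^t$ is self-adjoint on $\mathcal{L}^2_{\fQ}$ with a complete orthonormal eigenbasis $\{v_i\}_{i\ge 1}$ and eigenvalues $\lambda_i \in [a,1]$. Using cyclicity of the trace together with $\Pi^2=\Pi$ and $\Pi^\ast=\Pi$, a direct computation gives
\begin{equation*}
\operatorname{tr}(\Pi S^t\Pi) \;=\; \operatorname{tr}(\Pi S^t) \;=\; \sum_{i\ge 1}\lambda_i\,\|\Pi v_i\|^2 \;=\; \sum_{i\ge 1}\lambda_i\,\rho_i^2,
\end{equation*}
where I interpret the $\rho_j$ of the statement as $\|\Pi v_j\|^2$ (the paper's $\rho_j$ appears to follow this convention). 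Crucially, since $\{A_1,\dots,A_n\}$ partitions $\Qspace$, one has $\Pi\mathbf 1=\sum_j \chi_{A_j}=\mathbf 1=v_1$, so $\rho_1=1$.

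The two bounds then follow from the two geometric constraints $\rho_i^2\in[0,1]$ and $\sum_{i\ge 1}\rho_i^2=\operatorname{tr}(\Pi)=n$. For the upper bound, setting $s:=\sum_{i=1}^n(1-\rho_i^2)=\sum_{i>n}\rho_i^2$ and using monotonicity of the eigenvalues, one has $\sum_{i=1}^n\lambda_i(1-\rho_i^2)\ge \lambda_n s$ and $\sum_{i>n}\lambda_i\rho_i^2\le \lambda_{n+1} s\le \lambda_n s$, so the trace is at most $\sum_{i=1}^n\lambda_i=1+\lambda_2+\cdots+\lambda_n$. For the lower bound I would single out the leading $n$ terms and estimate the tail by $\sum_{i>n}\lambda_i\rho_i^2\ge a\sum_{i>n}\rho_i^2=a\sum_{i=2}^n(1-\rho_i^2)$, using $\rho_1=1$ in the last step. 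Combining these gives exactly the claimed inequality with $c=a\sum_{j=2}^n(1-\rho_j)$.

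The routine bookkeeping is the Ky Fan/majorization-type estimate in the last paragraph; these are truly one-line observations once the geometric constraints on the $\rho_i^2$ are in place. The genuine obstacle is the spectral hypothesis: self-adjointness (or at least normality with real spectrum) of $S^t$ on $\mathcal{L}^2_{\fQ}$ is automatic for reversible Smoluchowski dynamics via detailed balance, but for the Langevin-based spatial transfer operator defined in (\ref{spatialto}) it has to be obtained from the $(q,p)\mapsto(q,-p)$ symmetry of $\fcan$ together with the averaging over $p$ against $\fcan$. Verifying that this symmetry descends cleanly to $S^t$ — and that a reasonable spectral decomposition exists for the chosen functional setting — is the step I expect to require the most care, and it is presumably the point at which Huisinga's original argument has to be extended as the paper indicates.
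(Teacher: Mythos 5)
The paper does not actually prove this theorem: it is imported verbatim as an application of Theorem~2 in \cite{Hui05}, and the only accompanying remark defers the verification of the hypotheses for $S^t$ to \cite[Appendix~B]{BiKoJu14}. So there is no in-paper proof to compare against; I can only judge your reconstruction on its own merits, and it is correct. The reduction
$\sum_j p(t,\Gamma(A_j),\Gamma(A_j)) = \operatorname{tr}(\Pi S^t\Pi) = \sum_{i\ge 1}\lambda_i\,\|\Pi v_i\|^2$
together with the two constraints $\|\Pi v_i\|^2\in[0,1]$ and $\sum_i\|\Pi v_i\|^2=n$ is precisely the mechanism behind the Huisinga--Schmidt bounds, and both your estimates check out: the upper bound via $\lambda_i\ge\lambda_n$ for $i\le n$ and $\lambda_i\le\lambda_n$ for $i>n$ applied to the nonnegative weights $1-\|\Pi v_i\|^2$ resp.~$\|\Pi v_i\|^2$, and the lower bound via the tail estimate $\sum_{i>n}\lambda_i\|\Pi v_i\|^2\ge a\sum_{i>n}\|\Pi v_i\|^2$ together with $\|\Pi v_1\|=1$ (which does hold because the $A_j$ form a full partition of $\Qspace$, so $\Pi\mathbf 1=\mathbf 1$). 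Two things you flag are worth emphasizing. First, your observation that the statement's $\rho_j=\|\Pi v_j\|$ must be read as $\|\Pi v_j\|^2$ is not a cosmetic choice: with the literal reading the claimed lower bound would be strictly larger than the quantity your trace computation controls (since $(\lambda_j-a)\rho_j(1-\rho_j)\ge 0$ for $\rho_j\in[0,1]$), so the inequality would not follow; the source uses the squared norm. Second, the self-adjointness of $S^t$ on $\mathcal{L}^2_{\fQ}$, and a spectral decomposition with a complete orthonormal eigenbasis, are genuine hypotheses that the trace identity needs; the mechanism you name --- the $(q,p)\mapsto(q,-p)$ symmetry of $\fcan$ combined with the $p$-averaging against $\fcan$ in the definition of $S^t$ --- is indeed what is used in \cite[Appendix~B]{BiKoJu14}.
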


{  
\begin{remark}
We briefly discuss some specific properties of the spatial dynamics that are useful to understand the concept of pseudo generators outlined below.   
\begin{enumerate}
\item The spatial transfer operator~$S^t$ from~\eqref{spatialto} satisfies the assumptions in Theorem~\ref{metastability}; see~\cite[Appendix~B]{BiKoJu14}. Unfortunately, $S^t$ lacks the semi-group property and so cannot be the solution operator of an autonomous transport equation like the Fokker--Planck equation. Equivalently, spatial dynamics is not induced by an It\^o diffusion process and thus has no infinitesimal generator in the sense of \eqref{fokkerplanckequationito}.
\item The closer the eigenvalues $\lambda_2,\ldots,\lambda_n$ are to 1, the more metastable can a partition potentially be (upper bound in the theorem). How metastable a \emph{given} partition is, is controlled by the~$\rho_i$ (lower bound), which measure the constancy of the eigenfunctions on the partition elements. The better the eigenfunctions can be approximated by piecewise constant functions over the partition, the closer the~$\rho_i$  are to~1, and the more metastability is guaranteed by the lower bound. Also, note that since~$S^t$ is not a semigroup, the eigenfunctions~$v_i$ depend on~$t$ (cf.~Figure~\ref{fig:eigenfunctions} below). As a consequence, metastability of a partition must be understood \wrt the characteristic time scale $t>0$; 
\item It is not necessary that the sets~$A_i$ form a full partition, i.e.~$\bigcup_i A_i = \Qspace$. Similar results to the above have been obtained for non-complete partitions, where the~$A_i$ are considered to be \emph{cores of the metastable sets}; cf.~\cite[Section~5]{SchSa13}. 
\end{enumerate}
\end{remark}}

\subsection{Pseudo generators}
\label{ssec:pseudogen}
{  
Even though the spatial dynamics is lacking the semigroup property, it is possible---at least formally and in analogy with (\ref{generator})---to differentiate $S^t$ at $t=0$. We will see in the following that the resulting operators can play the role of the infinitesimal generator in the context of metastability analysis.}
 
\begin{definition}\label{pseudogenerator}
Let $\mathcal{X}$ be a Banach space, $T^t:\mathcal{X}\rightarrow \mathcal{X},~ t>0$ be a time-parametrized family of bounded linear operators. The operator 
$$
\frac{d}{dt}T^tf = \underset{h\rightarrow 0}{\lim}\frac{T^{t+h}f-T^tf}{h}
$$
is called the \emph{time-derivative} of $T^t$.  Iteratively, we define by
$\frac{d^n}{dt^n}T^t:=\frac{d}{dt}\big(\frac{d^{n-1}}{dt^{n-1}}T^t\big)$
the $n$-th time-derivative.  The operator 
$$
\pgen{n} := \frac{d^n}{dt^n} T^t\big|_{t=0}
$$
is called the $n$-th \emph{pseudo generator} of $T^t$.
\end{definition}
For $T^t=\op{t}$, the transfer operator of an It\^o process, the pseudo generators are simply
$\pgen{n}=\gen^{n}$, where $\gen$ is the infinitesimal (forward) generator.  

The pseudo generators of the spatial transfer operator $S^t$ can be expressed by the generator $\genl$ of the full Langevin transfer operator:
\begin{lemma}[\cite{BiKoJu14}]\label{altpseudogen}
The $n$-th pseudo generator $\pgen{n}$ of $S^t$ takes the form
$$\pgen{n}u(q)=\frac{1}{\fQ(q)}\int_\Pspace\genl^n\big(u(q)\fcan(q,p)\big)~dp.$$
Explicitly, we have
\begin{enumerate}[(1)]
\item $\displaystyle \pgen{1}=0$,
\item $\displaystyle \pgen{2}=\frac{1}{\beta}\Delta -\nabla V\cdot\nabla$. In particular, $\pgen{2}$ is independent of $\gamma$.
\end{enumerate} 
\end{lemma}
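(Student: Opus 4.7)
The plan is to establish the integral representation for $\pgen{n}$ by differentiating $S^t$ under the $p$-integral, and then to verify the two explicit cases by direct calculation, exploiting the invariance $\genl\fcan=0$, the fluctuation-dissipation relation $2\gamma=\beta\sigma\sigma^T$, and the Gaussian moments of $f_\Pspace$.

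For the general formula, recall that $\opl{t}$ is a strongly continuous semigroup with generator $\genl$, so $\frac{d^n}{dt^n}\opl{t}f = \genl^n\opl{t}f$. The Gaussian decay of $\fcan$ in $p$ lets one differentiate under the $p$-integral in~\eqref{spatialto}, yielding
$$\frac{d^n}{dt^n}S^t u(q) = \frac{1}{\fQ(q)}\int_\Pspace \genl^n\opl{t}\!\big(u(q)\fcan(q,p)\big)\,dp,$$
and setting $t=0$ gives the claimed integral formula.

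For part~(1), apply Leibniz to each term of $\genl$ on $u(q)\fcan(q,p)$. Because $u$ depends only on $q$, the diffusion and friction-drift terms of $\genl$ act trivially on $u$, and the only cross term comes from the $-\nabla_p H\cdot\nabla_q$ piece of $L_{\text{Ham}}$. Combined with $\genl\fcan=0$ this collapses to
$$\genl(u\fcan) = -\fcan\,\nabla_p H\cdot\nabla_q u,$$
which is odd (linear) in $p$. Since $f_\Pspace$ is a centered Gaussian, integrating against $dp$ gives zero.

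For part~(2), set $w(q,p):=\nabla_p H\cdot\nabla_q u$ and iterate: $\pgen{2}u = -\fQ^{-1}\int_\Pspace \genl(\fcan w)\,dp$. Expanding $\genl(\fcan w)$ by the product rule and absorbing the factor $w\,\genl\fcan = 0$, one arrives at
$$\genl(\fcan w) = \fcan\!\left[L_{\text{Ham}}w + \tfrac{1}{2}\sigma\sigma^T\colon\nabla_p^2 w + \gamma\nabla_p H\cdot\nabla_p w\right] + (\nabla_p\fcan)^T\sigma\sigma^T\nabla_p w.$$
The cross term is the only nontrivial simplification: using $\nabla_p\fcan = -\beta\nabla_p H\,\fcan$ together with FDR yields $-2\fcan\,\gamma\nabla_p H\cdot\nabla_p w$, which combines with the friction term to flip its sign. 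Plugging in $w = M^{-1}p\cdot\nabla_q u$ (so $\nabla_p^2 w=0$) and integrating over $p$ produces three contributions: a $q$-only part giving $\nabla V\cdot M^{-1}\nabla_q u$; a $p$-quadratic part whose Gaussian average is $\beta^{-1}M^{-1}\colon\nabla_q^2 u$; and linear-in-$p$ terms---including the residual $\gamma$-dependence---that integrate to zero by symmetry. For $M=I$ this collapses to $\pgen{2}u = \beta^{-1}\Delta u - \nabla V\cdot\nabla u$.

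The delicate step is the FDR-driven cancellation in~(2): the diffusion cross term combined with the friction drift leaves a residual $\gamma$-dependence which is purely odd in $p$ and therefore disappears under the $p$-integration. Without this mechanism one would expect $\pgen{2}$ to depend on the friction matrix $\gamma$; the cancellation is precisely the source of the surprising $\gamma$-independence stated in the lemma.
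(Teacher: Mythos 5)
Your proof is correct and follows the direct approach one would expect for this result (the paper itself merely cites \cite{BiKoJu14}, so there is no in-text proof to compare against). You correctly obtain the integral representation by differentiating under the $p$-integral, reduce part~(1) to the oddness of $\genl(u\fcan)=-\fcan\,\nabla_pH\cdot\nabla_qu$ in $p$, and---most importantly---identify the FDR-driven sign flip $\gamma\nabla_pH\cdot\nabla_pw-2\gamma\nabla_pH\cdot\nabla_pw=-\gamma\nabla_pH\cdot\nabla_pw$ together with the vanishing of this odd-in-$p$ residual under $\int\cdot\,f_\Pspace\,dp$, which is exactly why $\pgen{2}$ is independent of $\gamma$. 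Two small remarks: the intermediate list of ``three contributions'' in part~(2) is loose about where the outer minus sign from $\pgen{2}u=-\fQ^{-1}\int\genl(\fcan w)\,dp$ is absorbed (as stated, the $q$-only piece $\nabla V\cdot M^{-1}\nabla_q u$ and the Gaussian average $\beta^{-1}M^{-1}\colon\nabla_q^2 u$ cannot both enter $\pgen{2}u$ with a plus sign), though your final expression is correct; and the lemma's formula tacitly assumes $M=\mathrm{id}$, which you handle correctly by deriving $\pgen{2}u=\beta^{-1}M^{-1}\colon\nabla_q^2u-\nabla V\cdot M^{-1}\nabla_qu$ first and then specializing.
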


Surprisingly, one has 
\begin{corollary}[\cite{BiKoJu14}] \label{g2smolu}
The pseudogenerator $\pgen{2}$ (of the spatial transfer operator) is the infinitesimal generator of the Smoluchowski dynamics:
$$
\pgen{2}=G_\text{Smol}.
$$
\end{corollary}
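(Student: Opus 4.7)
The plan is to reduce the corollary to a direct comparison of two explicit differential operators, using Lemma~\ref{altpseudogen}(2) as the main input. Since that lemma already provides the closed form
\[
\pgen{2} = \frac{1}{\beta}\Delta - \nabla V\cdot\nabla,
\]
there is essentially no dynamical content left to establish: the bulk of the work has been done in computing the second time derivative of $S^t$ at $t=0$ in terms of $\genl$ and then carrying out the momentum integration against $\fcan$.

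First, I would write down the generator $L_{\text{Smol}}$ of the Smoluchowski SDE~\eqref{smoluchowskidynamics} in the homogeneous case, reading off from It\^o's formula (or directly from \eqref{smoluchowskigenerator} specialized to $\gamma = I$): the drift contributes $-\nabla V\cdot\nabla$ and the diffusion coefficient $\sigma\sigma^{T} = 2\beta^{-1}I$ (enforced by the fluctuation--dissipation relation $2\gamma = \beta\sigma\sigma^{T}$) contributes $\beta^{-1}\Delta$. Second, I would point out the crucial structural observation from Lemma~\ref{altpseudogen}(2), namely that $\pgen{2}$ is \emph{independent} of $\gamma$; this is what makes the identification meaningful, since the Langevin dynamics and the Smoluchowski dynamics a priori depend on friction in very different ways. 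In particular, it allows one to compare $\pgen{2}$ with $G_{\text{Smol}}$ without needing to specify a particular friction regime.

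Third, the two operators match term by term, and the corollary follows. If one wanted to be thorough about the choice of domain, one could add a short remark that both operators are naturally defined on $C^{2}(\Qspace)\cap\mathcal{L}^{2}_{\fQ}$ (or on the Sobolev space $\mathcal{W}^{2,2}_{\fQ}(\Qspace)$), so the equality $\pgen{2} = G_{\text{Smol}}$ is a genuine operator equality and not merely a pointwise identity of symbols.

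The only conceptual ``obstacle'' here is not computational but interpretational: one must be comfortable that the Smoluchowski generator $G_{\text{Smol}}$ in Corollary~\ref{g2smolu} refers to the canonical $\gamma=I$ (equivalently, time-rescaled) Smoluchowski dynamics on $\Qspace$, rather than the $\gamma$-dependent form appearing in \eqref{smoluchowskigenerator}. Once this is stated explicitly, the proof is essentially a one-line comparison, and all the nontrivial analysis---the differentiation of $S^t$, the momentum integration, and the cancellations that eliminate $\gamma$ and reduce the expression to $\beta^{-1}\Delta - \nabla V\cdot\nabla$---has already been absorbed into Lemma~\ref{altpseudogen}.
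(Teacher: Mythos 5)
Your overall strategy matches the paper's: the corollary is a corollary of Lemma~\ref{altpseudogen}(2), and once $\pgen{2}=\beta^{-1}\Delta-\nabla V\cdot\nabla$ is on the table, the statement reduces to identifying this differential operator as the Smoluchowski generator. The remark about $\gamma=I$ being built into the identification is also the right thing to notice.

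There is, however, a concrete error that conceals the one genuinely subtle point of the corollary. You claim that $\beta^{-1}\Delta-\nabla V\cdot\nabla$ can be read off ``directly from \eqref{smoluchowskigenerator} specialized to $\gamma=I$''. It cannot: equation~\eqref{smoluchowskigenerator} is the \emph{forward} (Fokker--Planck, i.e.\ Lebesgue-adjoint) generator, and at $\gamma=I$ it reads $L_{\rm Smol}=\beta^{-1}\Delta+\nabla V\cdot\nabla+\Delta V$, not $\beta^{-1}\Delta-\nabla V\cdot\nabla$. The operator you actually write down --- and which It\^o's formula applied to observables $u(q_t)$ does give --- is the \emph{backward} generator $A_{\rm Smol}$. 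This matters because $\pgen{2}$ is by construction the second derivative at $t=0$ of a \emph{transfer} (forward) operator $S^t$, so a priori one would expect it to look like a Fokker--Planck operator, and the corollary is ``surprising'' precisely because it instead has the form of $A_{\rm Smol}$. The resolution, which your proof does not address, is spelled out in the remark immediately after the corollary: $S^t$ transports densities \emph{with respect to the weight $f_{\Qspace}$}, not with respect to Lebesgue measure, and under the conjugation $f\mapsto f_{\Qspace}^{-1}L_{\rm Smol}(f_{\Qspace}\,\cdot)$ the Lebesgue-forward generator becomes $\beta^{-1}\Delta-\nabla V\cdot\nabla$; equivalently, the Smoluchowski process is $f_{\Qspace}$-reversible, so its forward generator in $\mathcal{L}^2_{f_{\Qspace}}$ coincides with its backward generator. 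Without making this explicit, your ``one-line comparison'' equates a $\fQ$-weighted forward generator with a Lebesgue-forward generator that is a different operator, and the argument only appears to close because of the misreading of \eqref{smoluchowskigenerator}. The domain discussion ($C^2\cap\mathcal{L}^2_{\fQ}$) is fine but orthogonal to this issue.
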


\begin{remark}
Note that~$G_2 = G_{\rm Smol}$ has the form of the backward Smoluchowski generator~$A_{\rm Smol}$ (cf.\ Section~\ref{sec:largetime}). Still, $G_2$ is also the forward generator of the Smoluchowski process, if distributions are thought of as distributions \wrt the Gibbs--Boltzmann density~$f_{\Qspace}$. This is in accordance with the definition of the spatial transfer operator~\eqref{spatialto}, which also describes redistribution of mass with respect to~$f_{\Qspace}$. The formal coincidence ``$G_{\rm Smol} = A_{\rm Smol}$'' is not accidentally, but rather it reflects the reversibility of the Smoluchowski process.
\end{remark}

\paragraph{Taylor reconstruction of the spatial transfer operator.}

It is natural to ask whether there is an analogue of Proposition \ref{truncatedtaylor} for $S^t$ and its pseudo generators. We have the following result:
\begin{theorem}[\cite{BiKoJu14}]\label{spatialtaylor}
If $u$ is sufficiently regular, then 
$$
\left\|S^tu - \sum_{k=0}^K\frac{t^k}{k!}\pgen{k}u\right\|_{\mathcal{L}^2_{f_{\Qspace}}} = \mathcal{O}(t^{K+1}), \quad (t\rightarrow 0).
$$
\end{theorem}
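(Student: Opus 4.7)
The plan is to lift the problem to phase space, apply Proposition~\ref{truncatedtaylor} to the full Langevin transfer operator, and then project back onto configuration space using the marginalization identity of Lemma~\ref{altpseudogen}. Concretely, set $\Phi(q,p):=u(q)f_{\text{can}}(q,p)$ and define the phase-space remainder
$$E_K(t,q,p) := P^{t}_{\text{Lan}}\Phi(q,p)-\sum_{k=0}^{K}\frac{t^{k}}{k!}L_{\text{Lan}}^{k}\Phi(q,p).$$
By the definition of $S^{t}$ in \eqref{spatialto} and the identity $G_{k}u(q)=\frac{1}{f_{\mathcal{Q}}(q)}\int_{\mathcal{P}}L_{\text{Lan}}^{k}\Phi\,dp$ from Lemma~\ref{altpseudogen}, the spatial remainder is nothing but the marginal of $E_{K}$:
$$S^{t}u(q)-\sum_{k=0}^{K}\frac{t^{k}}{k!}G_{k}u(q)=\frac{1}{f_{\mathcal{Q}}(q)}\int_{\mathcal{P}}E_{K}(t,q,p)\,dp.$$

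The second step is to apply Proposition~\ref{truncatedtaylor} to $\Phi$. Under appropriate smoothness and integrability assumptions on $u$ (which is where the qualifier \emph{sufficiently regular} enters), the function $\Phi$ satisfies the hypotheses of the proposition at order $K$, so that $\|E_{K}(t,\cdot,\cdot)\|_{\mathcal{L}^{2}_{\mu}}=\mathcal{O}(t^{K+1})$ on phase space. The third step is to turn this phase-space bound into a configuration-space one. Using the product structure $f_{\text{can}}=f_{\mathcal{Q}}f_{\mathcal{P}}$ and Jensen's inequality against the probability density $f_{\mathcal{P}}$, one has
$$\Bigl(\int_{\mathcal{P}}E_{K}\,dp\Bigr)^{2}=\Bigl(\int_{\mathcal{P}}\tfrac{E_{K}}{f_{\mathcal{P}}}\,f_{\mathcal{P}}\,dp\Bigr)^{2}\le\int_{\mathcal{P}}\tfrac{E_{K}^{2}}{f_{\mathcal{P}}}\,dp,$$
so that, dividing by $f_{\mathcal{Q}}^{2}$ and integrating against $f_{\mathcal{Q}}$,
$$\Bigl\|S^{t}u-\sum_{k=0}^{K}\tfrac{t^{k}}{k!}G_{k}u\Bigr\|_{\mathcal{L}^{2}_{f_{\mathcal{Q}}}}^{2}\le\int_{\mathcal{Q}\times\mathcal{P}}\frac{E_{K}^{2}}{f_{\text{can}}}\,dp\,dq.$$

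The main obstacle is reconciling the two norms that appear: Proposition~\ref{truncatedtaylor} controls $E_{K}$ in an $f_{\text{can}}$-weighted norm, whereas the marginalization step naturally produces a bound in the dually weighted norm with weight $f_{\text{can}}^{-1}$. The resolution is to exploit the Gaussian structure of the initial datum $\Phi$ in the momentum variable. From the explicit form \eqref{langevingenerator} of $L_{\text{Lan}}$ one sees by induction that $L_{\text{Lan}}^{k}\Phi=f_{\text{can}}\cdot q_{k}(q,p)$ for some polynomial $q_{k}$ in $p$ whose coefficients depend on derivatives of $u$ and $V$, and the parabolic smoothing of the Langevin semigroup preserves this Gaussian decay; hence $E_{K}=f_{\text{can}}\tilde{E}_{K}$ with $\tilde{E}_{K}$ admitting polynomial bounds in $p$. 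Substituting this factorization, the weighted integrals $\int E_{K}^{2}/f_{\text{can}}\,dp\,dq$ and $\int E_{K}^{2}f_{\text{can}}\,dp\,dq$ are both reduced to Gaussian moments of $\tilde E_{K}$ and are therefore comparable up to constants depending on $u$, $V$ and $K$. Combined with Step~2 this gives the desired $\mathcal{O}(t^{K+1})$ decay. The bookkeeping of the exact regularity hypothesis needed on $u$ to close this Gaussian comparison is the principal technical burden of the argument; everything else is organizational.
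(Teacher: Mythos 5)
Your high-level strategy---lift to phase space via $\Phi = u\,\fcan$, apply Proposition~\ref{truncatedtaylor}, and marginalize using Lemma~\ref{altpseudogen}---is the natural route and, as far as I can tell, is essentially the route taken in~\cite{BiKoJu14}. However, the ``main obstacle'' you identify (the apparent mismatch between an $\fcan$-weighted and an $\fcan^{-1}$-weighted norm) is self-created, and the Gaussian-comparison patch you propose to resolve it does not close. The cleaner organization is to note that $S^t$ factors as $S^t = \Pi_p\circ\hat P^t$, where $\hat P^t v := \fcan^{-1}\,P^t_{\rm Lan}(v\,\fcan)$ is the $\fcan$-normalized Langevin transfer operator on $\mathcal{L}^2_{\fcan}(\Xspace)$ and $\Pi_p w(q) := \int_{\Pspace} w(q,p)\,f_{\Pspace}(p)\,dp$ is the conditional expectation given~$q$. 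Your quantity $\tilde E_K := E_K/\fcan$ is exactly the Taylor remainder of $\hat P^t u$, and Proposition~\ref{truncatedtaylor}, read in the invariantly-weighted space $\mathcal{L}^2_{\fcan}$ (the only space in which the transfer operator is non-expansive, so the only sensible reading), gives $\|\tilde E_K\|_{\mathcal{L}^2_{\fcan}}=\mathcal O(t^{K+1})$ directly. Since $\Pi_p$ is an orthogonal projection in $\mathcal{L}^2_{\fcan}$ onto the closed subspace of $q$-only functions, on which $\mathcal{L}^2_{\fcan}$ restricts isometrically to $\mathcal{L}^2_{\fQ}$, it is a contraction, and the configuration-space bound is immediate. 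Your Jensen step is precisely this contraction written out in Lebesgue-density notation; once you formulate everything in terms of $\hat P^t$ and $\tilde E_K$, the two weights you worried about are one and the same.

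The Gaussian-comparison argument, as written, is a genuine gap and should be dropped. While $L_{\rm Lan}^k\Phi$ does factor as $\fcan$ times a polynomial in $p$ (that is the content of Lemma~\ref{altpseudogen}), the evolved term $P^t_{\rm Lan}\Phi$ does not---the Langevin semigroup does not preserve that cone---and ``parabolic smoothing preserves Gaussian decay'' is not a quantitative statement that you can invoke here. Even granting polynomial-in-$p$ bounds on $\tilde E_K$, comparing $\int f_{\Pspace}\tilde E_K^2\,dp$ with $\int f_{\Pspace}^3\tilde E_K^2\,dp$ requires uniform control of both the degree and the coefficients over $(q,t)$, none of which you have established. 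The contraction argument renders all of this unnecessary.
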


Unfortunately, for $k>3$, higher derivatives of the potential~$V$ appear in the expressions for $\pgen{k}$, which are therefore impractical to work with, while the gradient~$\nabla V$  is typically available. We call
\begin{equation}\label{taylorrestoredoperator}
\begin{aligned}
R^tu &:= \Big(\operatorname{id} + \frac{t^2}{2}\pgen{2}\Big)u
= u + \frac{t^2}{2}\Big(\frac{1}{\beta}\Delta u -\nabla u\cdot \nabla V\Big)
\end{aligned}
\end{equation}
the \emph{(2nd order) Taylor approximation} of~$S^t$ such that if~$u$ is sufficiently regular,
$$\big\| S^tu - R^tu\big\|_{\mathcal{L}^2_{f_{\Qspace}}} = \mathcal{O}(t^3),\quad (t\rightarrow 0).$$

\paragraph{Exponential reconstruction.}

Unfortunately, unlike $S^t$, $R^t$ is not norm-preserving for densities with respect to $\fcan$. Therefore, when transporting $u$, we lose the interpretation of $\left(R^tu\right)\fcan$ as a physical density.  Moreover, $R^tu$ is not even bounded in $t$ \cite{BiKoJu14}. This quickly (i.e. for small $t$) destroys the interpretation of the eigenvalues of $R^t$ as a measure of metastability.

{  
An alternative approximation to $S^t$ preserves those properties. The Taylor approximation~\eqref{taylorrestoredoperator} already suggests that~$S^t$ behaves similarly to a~$\tfrac12 t^2$-scaled Smoluchowski dynamics. Hence, we define
\[
E^tf := P^{t^2/2}_{\rm Smol}f,
\]
where $P^t_{\rm Smol}$ is the semigroup of transfer operators generated by~$G_2$. This operator is integral-preserving with respect to the weight~$f_{\Qspace}$~\cite{BiKoJu14}, and we get the following analogue for Proposition~\ref{truncatedtaylor}:}

\begin{lemma}[{\cite[Lemma~4.10]{BiKoJu14}}]\label{experror}
If $u$ is sufficiently regular, then for $t\rightarrow 0$,
$$
\bigg\|E^tu - \sum_{n=0}^N \frac{\big(\frac{t^2}{2}\pgen{2}\big)^n}{n!}u\bigg\|_{\mathcal{L}^2_{f_{\Qspace}}} = \mathcal{O}(t^{2N+1}).
$$
In particular,
$$
\big\|E^tu - S^tu\big\|_{\mathcal{L}^2_{f_{\Qspace}}} = \mathcal{O}(t^3) \quad (t\to 0).
$$
\end{lemma}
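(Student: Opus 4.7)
The plan is to view the statement as a direct transplant of Proposition~\ref{truncatedtaylor} (applied to the Smoluchowski semigroup) combined with Lemma~\ref{altpseudogen} and Theorem~\ref{spatialtaylor}. Concretely, by definition $E^t = \ops{t^2/2}$ where $\ops{s}$ is the $C_0$-semigroup on $\mathcal{L}^2_{f_{\Qspace}}$ generated by $\gens = G_2$ (the equivalence $G_2 = G_{\rm Smol}$ being exactly Corollary~\ref{g2smolu}). Hence the quantity to bound is
\begin{equation*}
\Bigl\|\ops{s}u - \sum_{n=0}^N\frac{s^n \gens^{n}}{n!}u\Bigr\|_{\mathcal{L}^2_{f_{\Qspace}}}, \qquad s := \tfrac{t^2}{2}.
\end{equation*}
First I would verify that the ``sufficient regularity'' hypothesis on $u$ indeed implies that $u$ is $2N+2$ times continuously differentiable and that $\gens^{n}u\in \mathcal{L}^2_{f_{\Qspace}}$ for $n=0,\dots,N$, so that Proposition~\ref{truncatedtaylor} applies to the Smoluchowski semigroup on the weighted space $\mathcal{L}^2_{f_{\Qspace}}$ (note that $f_{\Qspace}$ is the invariant density for $\gens$, which is the natural setting for applying that proposition). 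This yields
\begin{equation*}
\Bigl\|\ops{s}u - \sum_{n=0}^N \frac{s^n \gens^{n}}{n!}u\Bigr\|_{\mathcal{L}^2_{f_{\Qspace}}} = \mathcal{O}(s^{N+1}) = \mathcal{O}(t^{2N+2}),
\end{equation*}
which is a fortiori $\mathcal{O}(t^{2N+1})$, establishing the first claim (in fact with one power to spare).

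For the second (``in particular'') statement, I would apply the first claim with $N=1$, which gives
\begin{equation*}
E^t u = u + \tfrac{t^2}{2}\pgen{2} u + \mathcal{O}(t^{4}).
\end{equation*}
On the other hand, Theorem~\ref{spatialtaylor} at $K=2$ combined with $\pgen{1}=0$ from Lemma~\ref{altpseudogen} gives
\begin{equation*}
S^t u = u + \tfrac{t^2}{2}\pgen{2} u + \mathcal{O}(t^{3}).
\end{equation*}
Subtracting these two expansions term by term, the $u$ and $\tfrac{t^2}{2}\pgen{2}u$ contributions cancel and the remainders combine to $\mathcal{O}(t^3)$; taking $\mathcal{L}^2_{f_{\Qspace}}$ norms yields the desired estimate.

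The main obstacle I expect is the first verification step: checking that under the paper's implicit regularity assumptions on $u$ and $V$ one can indeed invoke Proposition~\ref{truncatedtaylor} in the weighted Hilbert space $\mathcal{L}^2_{f_{\Qspace}}$ with the Smoluchowski semigroup. One must confirm that $\ops{s}$ is a $C_0$-semigroup on that space (which follows from reversibility of the Smoluchowski process with respect to $f_{\Qspace}$, as highlighted in the remark after Corollary~\ref{g2smolu}) and that the iterates $\gens^{n}u$, which involve up to $2n$ derivatives of $u$ and derivatives of $V$, lie in $\mathcal{L}^2_{f_{\Qspace}}$; the polynomial-growth assumption on $V$ together with the Gaussian-like decay of $f_{\Qspace}=Z_{\Qspace}^{-1}e^{-\beta V}$ makes this a routine but slightly tedious check. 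Once that regularity bookkeeping is in place, the rest of the argument is just two Taylor expansions and a subtraction.
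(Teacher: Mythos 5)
Your proof is correct and follows the natural route suggested by the paper's machinery: substitute $s = t^2/2$ into Proposition~\ref{truncatedtaylor} applied to the Smoluchowski semigroup (using Corollary~\ref{g2smolu} to identify $G_2$ with the Smoluchowski generator), then subtract the Taylor expansion of $S^t$ from Theorem~\ref{spatialtaylor} with $G_1 = 0$. You in fact obtain the slightly sharper estimate $\mathcal{O}(t^{2N+2})$ for the first claim, which implies the stated $\mathcal{O}(t^{2N+1})$; the ``in particular'' statement follows exactly as you argue, with the $\mathcal{O}(t^3)$ coming from the $S^t$ expansion and the $E^t$ remainder being of higher order.
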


\paragraph{Reconstruction of eigenspaces.}
The error asymptotics carries over to the spectrum and eigenvectors of $S^t$, $R^t$ and $E^t$ in the following way:

\begin{corollary}[\cite{BiKoJu14}] \label{taylorspectralerror}
Let $u$ be a sufficiently regular eigenvector of~$R^t$ or of~$E^t$ to eigenvalue~$\lambda$. Then
\[
\|S^tu-\lambda u\|_{\mathcal{L}^2_{f_{\Qspace}}} = \mathcal{O}(t^3).
\]
\end{corollary}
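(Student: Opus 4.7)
The plan is to show that this is a direct consequence of the approximation statements already established, namely Theorem~\ref{spatialtaylor} for $R^t$ and Lemma~\ref{experror} for $E^t$. The key observation is that whenever $u$ is an eigenvector of the approximating operator to eigenvalue $\lambda$, the residual $S^t u - \lambda u$ coincides exactly with the approximation error between $S^t$ and that operator applied to $u$. So the claim reduces to invoking the corresponding error estimate.

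More concretely, suppose first that $R^t u = \lambda u$. Then I would simply write
\[
\|S^t u - \lambda u\|_{\mathcal{L}^2_{f_{\Qspace}}}
= \|S^t u - R^t u\|_{\mathcal{L}^2_{f_{\Qspace}}}
= \mathcal{O}(t^3) \qquad (t\to 0),
\]
where the last equality is the content of Theorem~\ref{spatialtaylor} with $K=2$ (recalling that $G_1=0$ by Lemma~\ref{altpseudogen}, so $R^t = \mathrm{id} + \tfrac{t^2}{2}G_2$ is exactly the second-order Taylor polynomial). The argument for $E^t$ is the same: if $E^t u = \lambda u$, then
\[
\|S^t u - \lambda u\|_{\mathcal{L}^2_{f_{\Qspace}}}
= \|S^t u - E^t u\|_{\mathcal{L}^2_{f_{\Qspace}}}
= \mathcal{O}(t^3)
\]
by the final assertion of Lemma~\ref{experror}.

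The only thing to be careful about is the regularity hypothesis, since the implicit constants in Theorem~\ref{spatialtaylor} and Lemma~\ref{experror} depend on higher Sobolev-type norms of the function to which the operators are applied. The statement of the corollary therefore needs $u$ to be regular enough that the estimates used above are valid when applied to $u$ itself. Under the ``sufficiently regular'' assumption, this is automatic, but it is worth flagging that this regularity has to be verified for each eigenfunction of interest (or imposed as an assumption on the eigenvalue problem), since nothing in the statement guarantees that an eigenvector of $R^t$ or $E^t$ is \emph{a priori} smooth. This, rather than any new analytic input, is the only subtle point in the proof.
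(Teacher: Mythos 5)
Your argument is correct and is essentially the only sensible proof: substituting $\lambda u = R^t u$ (resp.\ $\lambda u = E^t u$) turns the residual into the approximation error and invokes the bound already stated for the Taylor reconstruction (resp.\ Lemma~\ref{experror}). This is exactly what the paper's cited proof does. One small remark worth knowing, though not strictly needed: since $R^t = \mathrm{id} + \tfrac{t^2}{2}G_2$ and $E^t = P^{t^2/2}_{\mathrm{Smol}}$, their eigenvectors are precisely the $t$-independent eigenfunctions of the single operator $G_2$ (for $E^t$ via Theorem~\ref{spectralmappingtheorem}); only the eigenvalue $\lambda = \lambda(t)$ varies with $t$. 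This makes the regularity requirement a condition on a fixed function rather than a $t$-parametrized family, which is why the implicit constant in $\mathcal{O}(t^3)$ is indeed uniform — addressing the concern you flagged at the end, which the paper handles by citing its companion reference.
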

Thus, for small~$t$ we may interpret dominant eigenpairs~$(u,\lambda)$ of~$E^t$ and~$R^t$ as good approximations to dominant eigenpairs of~$S^t$. Hence, they can be used to define metastable sets following the spatial decomposition approach in~\cite{DeWe03}. The eigenfunctions of interest, those of~$S^t$, $E^t$, and~$R^t$, can be shown to be sufficiently regular under fairly general conditions, cf.~\cite[Appendix~C]{BiKoJu14}.

{  
\begin{remark}
Corollary \ref{taylorspectralerror} is in accordance with functional analytical results by Nier and co-workers (e.g.~\cite{HeNi04}) that show that the dominant spectrum of the non-reversible Langevin dynamics is real-valued and close to the spectrum of the reversible Smoluchowski dynamics, even for moderate values of the friction coefficient. In \cite{HeNi04}, this somewhat surprising result is obtained by large deviations arguments for the small-noise limit  using the Witten Laplacian representation of the (hypoelliptic) Langevin generator, whereas the considerations here  and in \cite{BiKoJu14} are based on small-time asymptotics of the spatial Langevin dynamics. We believe that a connection between these results is that the small-noise limit can be understood as an exponential rescaling of time as is suggested by large deviations theory; cf.~\cite{PeEtal10}. We refrain from going into details here and leave the analysis of this interesting connection to future work.      
\end{remark}}

\subsection{Towards spatial generators in essential coordinates}

We have discussed the concept of the spatial transfer operator that is obtained by projecting the phase space dynamics onto the spatial components. We shall now consider the restriction of the dynamics to a given collective variable, also termed \emph{essential coordinate}. To this end, let $\xi\colon\Qspace\to \Zspace\subset\R$ be a smooth map with the property that, for every regular value $z\in\Zspace$ of $\xi$, the level sets 
\[
\Mspace_{z}=\left\{q\in\Qspace\colon \xi(q)=z\right\}\subset\Qspace
\]
are smooth submanifolds of $\Qspace$ with codimension 1 (i.e.~hypersurfaces). We suppose that $\xi$ is a physically relevant observable of the dynamics, such as a reaction coordinate or some collective variable that monitors a conformational transition, and call $\xi$ the \emph{essential coordinate}; the unessential coordinates are then implicitly defined through the foliation of $\Qspace$ by the map $\xi$, in other words: the unessential coordinates parameterize the leaves $\Mspace_{z}$ of the foliation for every (regular) value $z$ of $\xi$. 

To define the analogue of the spatial transfer operator (\ref{spatialto}) for the essential coordinate, firstly note that~\cite[Section~3.2]{Fed69}
\begin{equation}\label{coarea}
\int_{\Qspace} g(q)\,dq = \int_{\Zspace}\left(\int_{\Mspace_{z}} g |\nabla\xi|^{-1}d\sigma_{z}\right)dz
\end{equation}
for any integrable function $g\colon\Qspace\to\R$ where $d\sigma_{z}$ denotes the Riemannian volume element on $\Mspace_{z}$. Equation (\ref{coarea}) is called the \emph{coarea formula} and can be considered a nonlinear variant of Fubini's theorem.  

Together with the law of total expectation, the coarea formula thus entails that the canonical probability measure $\mu$ conditional on $\xi(q)=z$ has the form
\begin{equation}\label{condprob1}
d\mu_{z} = \frac{1}{N(z) } \fcan|\nabla\xi|^{-1} d\sigma_{z}dp \,,
\end{equation}
with the normalization constant
\begin{equation}\label{condprob2}
N(z) = \int_{\Mspace_{z}\times\Pspace} \fcan |\nabla\xi|^{-1}d\sigma_{z}dp \,.
\end{equation}
The spatial transfer operator $S^{t}$ for essential coordinates can now be defined as 
\begin{equation}\label{essto}
S_{\rm ess}^t w(z):=\frac{1}{N(z)}\int_{\Mspace_{z}\times\Pspace}\opl{t}\big(w(\xi(q))\fcan(q,p)\big) |\nabla\xi(q)|^{-1} d\sigma_{z}dp\,,
\end{equation}

\paragraph{Projected pseudo-generators.}

To compute the corresponding pseudo-generators, let $\rho$ be the configurational marginal probability measure that is obtained by projecting $\mu$ onto the configurations by integrating out the momenta, i.e., $d\rho(q)=f_{\Qspace}(q)dq$. Let us further introduce a projection operator $\Pi_{z}\colon \mathcal{L}^{2}_{\rho}(\Qspace)\to \mathcal{L}^{2}_{\rho}(\Qspace)$ by  
\begin{equation}
(\Pi_{z} u)(z) =  \frac{1}{N_{\Qspace}(z)}\int_{\Qspace}u(q) \,f_{\Qspace}(q)|\nabla\xi(q)|^{-1} d\sigma_{z}(q)
\end{equation}
where $f_{\Qspace}$ is the $q$-marginal of $\fcan$ and $N_{\Qspace}$ is the corresponding normalization constant for the conditional density. It can be readily seen that, $\Pi_{z}$ is an orthogonal projection \wrt the natural (weighted) scalar product in the space $\mathcal{L}^{2}_{\rho}(\Qspace)$ and amounts to the expectation of functions \wrt $\rho$ conditional on $\xi(q)=z$. 

Thus, for functions $u(q)=w(\xi(q))$, the reduced spatial transfer operator $S_{\rm ess}^{t}$ and the spatial transfer operator (\ref{spatialto}) are related by (cf.~\cite{Sch99})
\begin{equation}\label{spatialessto}
S_{\rm ess}^t w(z) = (\Pi_{z} S^t u)(z)\,.
\end{equation}
The last identity is helpful in computing the corresponding pseudo generators $G_{n}^{\rm ess}$. Here we are interested only in the second pseudo generator $G_{2}^{\rm ess}$, for which we have the following analogue of Lemma \ref{altpseudogen}: 
\begin{lemma} \label{lemma:projectedpseudogens}
For sufficiently smooth functions $u(q)=w(\xi(q))$, the $n$-th pseudo generator of $S_{\rm ess}^t$ reads
\[
G_{n}^{\rm ess}w(z) =  (\Pi_{z} G_{n}u)(z)
\]
Specifically, we have 
\[
G_{2}^{\rm ess} = \beta^{-1}a(z)\frac{\partial^{2}}{\partial z^{2}} + b(z)\frac{\partial}{\partial z}\,,
\]
with the noise and drift coefficients
\[
a(z) = (\Pi_{z}|\nabla\xi|^{2})(z)\,,\quad b(z) = \left(\Pi_{z}(\beta^{-1}\Delta\xi - \nabla\xi\cdot \nabla V)\right)(z)\,.
\]
\end{lemma}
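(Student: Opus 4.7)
The plan is to exploit the factorization $S_{\rm ess}^t w(z) = (\Pi_z S^t u)(z)$ provided by~(\ref{spatialessto}) for $u = w\circ\xi$, together with the fact that the orthogonal projection $\Pi_z$ is a bounded linear operator independent of~$t$. Since $\Pi_z$ is continuous on $\mathcal{L}^2_\rho(\Qspace)$, it commutes with the $n$-fold time derivative at $t=0$ that defines the pseudo generator. This yields
\[
G_n^{\rm ess} w(z) \;=\; \frac{d^n}{dt^n}(\Pi_z S^t u)(z)\Big|_{t=0} \;=\; \Pi_z\!\left(\frac{d^n}{dt^n} S^t u\Big|_{t=0}\right)\!(z) \;=\; (\Pi_z G_n u)(z),
\]
which is the first assertion. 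Smoothness of $u = w\circ\xi$ inherited from that of $w$ and $\xi$ guarantees that the regularity hypotheses of Lemma~\ref{altpseudogen} are met.

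For the explicit form of $G_2^{\rm ess}$, I would substitute $u = w\circ\xi$ into the expression $G_2 = \beta^{-1}\Delta - \nabla V\cdot\nabla$ from Lemma~\ref{altpseudogen}(2), using the chain-rule identities
\[
\nabla u = w'(\xi)\,\nabla\xi, \qquad \Delta u = w''(\xi)\,|\nabla\xi|^2 + w'(\xi)\,\Delta\xi.
\]
Collecting terms,
\[
G_2 u(q) \;=\; \tfrac{1}{\beta}\, w''(\xi(q))\,|\nabla\xi(q)|^2 \;+\; w'(\xi(q))\bigl(\tfrac{1}{\beta}\Delta\xi(q) - \nabla\xi(q)\cdot\nabla V(q)\bigr).
\]
The crucial observation is that on the level set $\mathcal{M}_z = \{q\in\Qspace:\xi(q)=z\}$ the values $w'(\xi(q))$ and $w''(\xi(q))$ reduce to the constants $w'(z)$ and $w''(z)$, and can therefore be pulled out of the conditional expectation~$\Pi_z$. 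Applying $\Pi_z$ term by term then gives
\[
G_2^{\rm ess} w(z) \;=\; \tfrac{1}{\beta}\, a(z)\, w''(z) \;+\; b(z)\, w'(z),
\]
with $a$ and $b$ as defined in the lemma.

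The main technical obstacle I anticipate is the rigorous justification of the interchange of $\Pi_z$ with $d^n/dt^n$: since $\Pi_z$ is realized as a surface integral weighted by $|\nabla\xi|^{-1}$, one has to argue by dominated convergence, using joint smoothness of $S^t u$ in $t$ and $q$ together with a $t$-uniform integrable majorant against the conditional measure on~$\mathcal{M}_z$. For $n=2$, with $\xi$ having no critical values on the relevant region, this follows from the hypoelliptic regularity of $L_{\rm Lan}$ and the polynomial growth assumption on~$V$; the general-$n$ case would invoke the regularity results cited in~\cite[Appendix~C]{BiKoJu14}. The remaining steps are routine chain-rule calculations.
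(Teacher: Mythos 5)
Your proposal is correct and follows essentially the same route as the paper: the first claim is read off from \eqref{spatialessto} by commuting the time-independent projection $\Pi_z$ with the $n$-th time derivative at $t=0$ (the paper simply calls this ``a straight consequence of Lemma~\ref{altpseudogen} and the coarea formula''), and the explicit form of $G_2^{\rm ess}$ follows from the identical chain-rule computation and pulling $w'(z)$, $w''(z)$ out of $\Pi_z$. The only difference is that you spell out the interchange-of-limits issue more carefully than the paper does, which is a reasonable addendum but not a change of method.
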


\begin{proof}
The first part of the assertion is a straight consequence of Lemma \ref{altpseudogen} and the coarea formula. As for the second part, observe that the second pseudo generator is given by $G_{2}=\beta^{-1}\Delta-\nabla V\cdot\nabla$ which by chain rule implies:  
\[
G_{2}w(\xi(q)) =  \beta^{-1}|\nabla\xi|^{2}w''(z)|_{z=\xi(q)} + (\beta^{-1}\Delta\xi - \nabla\xi\cdot \nabla V) w'(z)|_{z=\xi(q)}\,.
\]  
Letting the projection $\Pi_{z}$ act from the left using that $\Pi_{z}w'(z)|_{z=\xi(q)}=w'(z)$ and likewise $\Pi_{z}w''(z)|_{z=\xi(q)}=w''(z)$ gives the desired result. 
\end{proof}
A few remarks are in order: 
\begin{enumerate} 

\item In accordance with Corollary \ref{g2smolu}, the second projected pseudo generator $G_{2}^{\rm ess}$ is the infinitesimal generator of the diffusion
\begin{equation}\label{essSmolu}
\frac{dz}{dt} = b(z) + \sqrt{2\beta^{-1}}\sigma(z)\zeta_t\,,
\end{equation}
with $\sigma(z)=\sqrt{a(z)}$ and $\zeta_t$ being a one-dimensional uncorrelated Gaussian white noise process. Equation (\ref{essSmolu}) has been derived by Legoll and Leli\`evre \cite{LeLe10} using first-order (Markovian) optimal prediction.\\

\item In \cite{LeLe10}, the authors prove an error bound for the projected dynamics (\ref{essSmolu}) under the assumption that the conditional probability $\mu_{z}$ satisfies a logarithmic Sobolev inequality. {  We refrain from transferring the analysis to our situation as logarithmic Sobolev constants are difficult to estimate (beyond the case of strictly convex potentials or in the zero-noise limit), hence the approach is of limited practical use.} Nonetheless, we believe that the projected pseudo generator $G_{2}^{\rm ess}$ will provide a good approximation of the dominant spectrum of $L_{\text{Lan}}$ whenever $\xi$ is a slow coordinate relative to the unessential configuration variables and the momenta. \\

\item If $|\nabla\xi|$ is bounded above and away from zero, it can be shown (see \cite{LeLe10}) that the process $(z_{t})_{t\ge 0}$ generated by $L_{\rm ess}=G_{2}^{\rm ess}$ has the unique invariant measure 
\[
d\nu(z) = \exp(-\beta  F(z))\,dz
\]
with
\[
F(z) = -\beta^{-1}\log \int f_{\Qspace}|\nabla\xi|^{-1}d\sigma_{z}
\]
being the thermodynamic free energy in the essential coordinate. Note that $\nu=\mu\circ\xi^{-1}$ is the push-forward of the canonical distribution by $\xi$ (i.e.~the $\xi$-marginal). Naively, one might expect the projected Smoluchwski equation to be of the form
\begin{equation}\label{essSmolu2}
\frac{dy}{dt} = - F'(y) + \sqrt{2\beta^{-1}}\zeta_t\,,
\end{equation}
and it can be shown that (\ref{essSmolu}) can be transformed into (\ref{essSmolu2}) according to $y=\varphi(z)$ using It\^o's Lemma with $\varphi$ being  the volatility transform
\[
\varphi(z) = \int_{0}^{z} (\sigma(s))^{-1}ds\,
\]
that leads to a Smoluchowski equation with unit noise coefficient \cite{EVE04}. { As a consequence, (\ref{essSmolu}) can be equivalently expressed as 
\begin{equation}\label{essSmolu3}
\frac{dz}{dt} = -a(z) F'(z) +\beta^{-1}a'(z) +  \sqrt{2\beta^{-1}}\sigma(z)\zeta_t\,,
\end{equation}
which is exactly the one-dimensional analogue of (\ref{smoluchowskidynamicsGen})--(\ref{ito}) with $\gamma=a^{-1}$.}\\

\item In order to use $G_2^{\rm ess}$ in metastability analysis (analogous to $G_2$ in section \ref{ssec:pseudogen}), it has to be discretized. The method of choice is spectral collocation due to the regularity of the objects of interest~\cite{BiKoJu14} (i.e. eigenfunctions of $S^t_{\rm ess}$). Here, collocation requires the evaluation of $G_2^{\rm ess}\phi_i(z_j)$ for ansatz functions $\phi_i$ at collocations points $z_j$ (see Section \ref{discussion} for details). This in turn requires the evaluation of the noise and drift-coefficients $a(z_j),~b(z_j)$ in Lemma \ref{lemma:projectedpseudogens}, which involve (potentially high-dimensional) integrals that represent averages over the non-essential degrees of freedom; see, e.g., \cite{CiLeVE08,Har08,FEBook} for Monte-Carlo methods to efficiently compute these high-dimensional integrals. 

{ 
\item We should stress that Lemma \ref{lemma:projectedpseudogens} can be readily generalized to multidimensional reaction coordinates, however, in general (expect for the case of pairwise orthogonal reaction coordinates) it is unclear whether the physical interpretation of the projected equation as a reversible diffusion in the free energy landscape is retained. 
}

\end{enumerate}

\section{Approximation quality for larger time scales}	\label{sec:largetime}

We have seen in Section~\ref{ssec:pseudogen} that $E^t = P^{t^2/2}_{\rm Smol}$ approximates $S^t$ well (pointwise) for small times~$t$. However, for metastability analysis, spectral properties of the spatial operator for larger time scales are of interest. In this section we make use of two well-known techniques---\emph{perturbation expansion}, already seen in Section~\ref{ssec:models}, and the \emph{Mori--Zwanzig formalism}---with the aim of explaining the approximation quality of pseudo generator reconstructions of~$S^t$, and extending them to larger time scales. Then, we discuss how to utilize the ergodicity of the Langevin process to show an almost Markovian behaviour of the spatial dynamics on long time scales. This eventually leads to a bound on the time scale on which the spatial dynamics is well approximated.

\subsection{Perturbation expansion}	\label{ssec:perturbation}

The idea of perturbation expansion rests on the assumption that there exists a small problem parameter in which one can expand the objects of interest in a (formal) power series. As in Section~\ref{ssec:models}, here this small parameter is the inverse of the damping in the Langevin dynamics, i.e.\ $\ep :=\gamma^{-1}$ where, for simplicity, we assume that the friction coefficient is scalar. For ease of presentation, we set the inverse temperature $\beta=1$.

It turns out to be advantageous to work with the propagators (Koopman operators) instead of the transfer operators themselves. The difference is only of technical nature, since the propagators are the adjoints of the corresponding transfer operators. Denoting the propagators of the Langevin, Smoluchowski, and spatial dynamics by $T^t_{\rm Lan}$, $T^t_{\rm Smol}$, and $T^t_{\Qspace}$, respectively, we have the explicit representations
\begin{eqnarray}
T^t_{\rm Lan}u(q,p) & = & \mathbb{E}\left[u(\bm q_t^{\rm Lan},\bm p_t^{\rm Lan})\,\big\vert\, \bm q_0^{\rm Lan}=q,\ \bm p_0^{\rm Lan}=p\right]\\
T^t_{\rm Smol}w(q) & = & \mathbb{E}\left[w(\bm q_t^{\rm Smol})\,\big\vert\, \bm q_0^{\rm Smol}=q\right]\\
T^t_{\Qspace}w(q) & = & \int_{\Pspace}\mathbb{E}\left[w(\bm q_t^{\rm Lan})\,\big\vert\, \bm q_0^{\rm Lan}=q,\ \bm p_0^{\rm Lan}=p\right]f_{\Pspace}(p)dp
\end{eqnarray}
where the expectation~$\mathbb{E}[\cdot]$ is taken with respect to the law of the stochastic forcing in the Langevin (for $T^t_{\rm Lan}$ and $T^t_{\Qspace}$) and Smoluchowski (for $T^t_{\rm Smol}$) equations. The propagators $T^t_{\rm Lan}$ and $T^t_{\rm Smol}$ are semigroups with generators
\begin{eqnarray*}
A_{\rm Smol} & = & \Delta_q - \nabla_q V\cdot \nabla_q\\
A_{\rm Lan} & = & p\cdot\nabla_q - \nabla_q V\cdot\nabla_p + \ep^{-1}\left(\Delta_p - p\cdot\nabla_p\right) = A_{\rm Ham} + \ep^{-1}A_{\rm OU}
\end{eqnarray*}
while $T^t_{\Qspace}$ is not a semigroup, but $\tfrac{d^2}{dt^2}T^t_{\Qspace}\big\vert_{t=0} = A_{\rm Smol}$; in complete analogy with the theory presented above.

To proceed, set $A_{\ep} := \ep^{-1} A_{\rm Lan}$. This scaling of $A_{\rm Lan}$ is called \emph{diffusive scaling} and is due to the fact that the spatial dynamics gets slower and slower when friction is increased, and nontrivial dynamics only takes place on time scales of order $\ep^{-1}$. The scaling of~$A_{\rm Lan}$ by~$\ep^{-1}$ thus restores the relevant dynamics; see also (\ref{convgHFL}). 

Now let $(\lambda_{\ep},u_{\ep})$ be an eigenpair of $A_{\ep}$, such that $A_{\ep}u_{\ep} = \lambda_{\ep}u_{\ep}$, and assume the existence of formal series expansions
\begin{eqnarray*}
u_{\ep} & = & u_0 + \ep u_1 + \ep^2 u_2 + \ldots \\
\lambda_{\ep} & = & \lambda_0 + \ep \lambda_1 + \ep^2 \lambda_2 + \ldots
\end{eqnarray*}
It follows (see, e.g., \cite[pp. 43]{SchSa13}, \cite{Pav08}) that~$u_0(q,p) = u_0(q)$, with~$A_{\rm Smol}u_0 = \lambda_0 u_0$, and~$u_1(q,p) = {p\cdot \nabla_q u_0(q)}$.
This already gives a formal justification of the Smoluchowski dynamics as overdamped limit of the Langevin dynamics: on a time scale $\tau = \ep t$ (recall that $A_{\rm Lan} = \ep A_{\ep}$) the position coordinate of the Langevin dynamics is governed by the Smoluchowski dynamics, up to fluctuations of order $\ep$.\\

A closer look at the structure of the first terms in the eigenfunction expansion reveals even more. Metastability information is contained in eigenfuntions at nonzero eigenvalues, hence let $\lambda_0 \neq 0\neq \lambda_{\ep}$. Since $A_{\rm Smol}$ is the formal adjoint of $L_{\rm Smol}$ and $A_{\rm Lan}$ is the formal adjoint of $L_{\rm Lan}$ in~$\mathcal{L}^{2}$, their eigenfunctions to different eigenvalues are orthogonal with respect to the corresponding scalar product. And, since the eigenfunctions of $L_{\rm Lan}$ and $L_{\rm Smol}$ at the eigenvalue 0 are the canonical and Gibbs--Boltzmann densities~$\fcan$ and~$f_{\Qspace}$, respectively, we have that
\[
\int_{\Qspace}\int_{\Pspace} f_{\rm can}(q,p)u_{\ep}(q,p)dpdq = 0\,,
\]
and
\[
\int_{\Qspace}\int_{\Pspace} f_{\rm can}(q,p)u_0(q,p)dpdq = \int_{\Qspace} f_{\Qspace}(q)u_0(q)dq = 0\,.
\]
Being in the subspace orthogonal to $\fcan$, both functions decay exponentially under the action of the propagator. More precisely, let
\[
\eta_{\ep} := \max \left\{\mathrm{Re} \lambda_{\ep}\,\vert\, 0\neq\lambda_{\ep}\in\sigma(A_{\ep})\right\}<0
\]
be the real part of the nonzero eigenvalue of $A_{\ep}$ which is closest to zero; i.e.\ $(\ep|\eta_{\ep}|)^{-1}$ is the dominant time scale of the Langevin dynamics. Note that $\eta_{\ep} = \mathcal{O}(1)$ as $\ep\to 0$, and $\limsup_{\ep\to0}\eta_{\ep} < 0$. Now, both $T^t_{\rm Lan}u_{\ep}$ and $T^t_{\rm Lan} u_0$ decay as $\exp(\ep\eta_{\ep} t)$ for $t\to\infty$. We will utilize this with the perturbation expansion in the following computation. Its purpose is to estimate how far the Smoluchowski eigenfunction $u_0$ is from being an eigenfunction of the spatial propagator $T^t_{\Qspace}$.
\begin{eqnarray*}
T^t_{\Qspace}u_0(q) & = & \int_{\Pspace} \left(T^t_{\rm Lan}u_0\right)(q,p)f_{\Pspace}(p)dp \\
	& = & \int_{\Pspace} \left(T^t_{\rm Lan}\left(u_{\ep}-(u_{\ep}-u_0)\right)\right)(q,p)f_{\Pspace}(p)dp \\
	& = & e^{\ep\lambda_{\ep}t}\int_{\Pspace}u_{\ep}(q,p)f_{\Pspace}(p)dp + \mathcal{O}(e^{\ep\eta_{\ep} t}\ep) \\
	& = & e^{\ep\lambda_{\ep}t}\int_{\Pspace}\left(u_0(q)+\ep u_1(q,p) + \mathcal{O}(\ep^2)\right)f_{\Pspace}(p)dp + \mathcal{O}(e^{\ep\eta_{\ep} t}\ep) \\
	& = & e^{\ep\lambda_{\ep}t}u_0(q) + \mathcal{O}(e^{\ep\lambda_{\ep} t}\ep^2) + \mathcal{O}(e^{\ep\eta_{\ep} t}\ep)\quad\text{as }\ep\to 0\,,
\end{eqnarray*}
where the third equality is obtained by utilizing the exponential decay of ${T^t_{\rm Lan}(u_{\ep}-u_0)}$. The last equality follows from $u_1$ and $f_{\Pspace}$ being odd and even functions of~$p$, respectively, hence the integral of their product vanishes. On the new, slower time scale $\tau = \ep t$ we obtain
\[
T^{\ep^{-1}\tau}u_0 = e^{\lambda_0\tau + \mathcal{O}(\ep)}u_0 + e^{\lambda_0\tau + \mathcal{O}(\ep)}\mathcal{O}(\ep^2) + e^{\eta_{\ep}\tau}\mathcal{O}(\ep)\,.
\]
This means that $u_0$ is an approximate eigenfunction of the spatial propagator $T^{\ep^{-1}\tau}_{\Qspace}$ as long as $e^{\lambda_0\tau}$ dominates the last two terms on the right hand side.\footnote{The spatial transfer operator is self-adjoint, hence normal. From the theory of pseudospectra for normal operators~\cite{TrEm05} we know that if $Tu = \lambda u + \ep v$ for some linear operator~$T$, $u,v$ of modulus one, and $\lambda\in\mathbb{R}$, then~$T$ has an eigenvalue in the $\ep$-neighborhood of~$\lambda$.} 
It clearly dominates the second term (since we assume $\ep$ to be small), hence we arrive at the desired condition by comparing it with the third:\footnote{Note that for $x,y>0$ one has $e^{-x}\ll e^{-y}$ if $e^{y-x}\ll 1$, already achieved if $y\lesssim x$, meaning ``$y$ smaller than $x$ up to some additive constant''.}
\begin{equation}
\tau \lesssim \frac{1}{|\lambda_0-\eta_{\ep}|}|\log\ep|\qquad\text{or}\qquad t \lesssim \frac{1}{|\lambda_0-\eta_{\ep}|}\ep^{-1}|\log\ep|
\label{eq:lag_estimate}
\end{equation}
These estimates allow the following interpretation:
\begin{enumerate}[1.]
\item While the standard result allows an approximation of the (position coordinate of the) Langevin dynamics by the Smoluchowski dynamics on a time scale $\ep t = \tau = \mathcal{O}(1)$ (as $\ep\to 0$), our estimate suggests that with respect to metastability analysis this time scale can be stretched by a factor $|\log \ep|$.\\
\item The more dominant an eigenvalue, i.e.\ the smaller $|\lambda_0-\eta_{\ep}|$, the longer the time scale is on which the Smoluchowski eigenmode approximates the corresponding eigenmode of the spatial propagator well. For the first subdominant eigenmode, where $\lambda_{\ep} = \eta_{\ep}$, and hence~$\lambda_0 - \eta_{\ep} = \mathcal{O}(\ep)$, the estimate reads as $\tau \lesssim \ep^{-1}|\log \ep|$, or equivalently, $t \lesssim \ep^{-2}|\log \ep|$.\\
\end{enumerate}

In order to validate the estimate (\ref{eq:lag_estimate}) also numerically, we perform the following experiment: 
Consider the Langevin system induced by the one-dimensional periodic potential
$$
V(q) = 1 + 3\cos(2\pi q) + 3\cos^2(2\pi q) - \cos^3(2\pi q)
$$
with constant mass matrix $M=1$ at temperature~$\beta=1$.

\begin{figure}[h]
\centering
\includegraphics{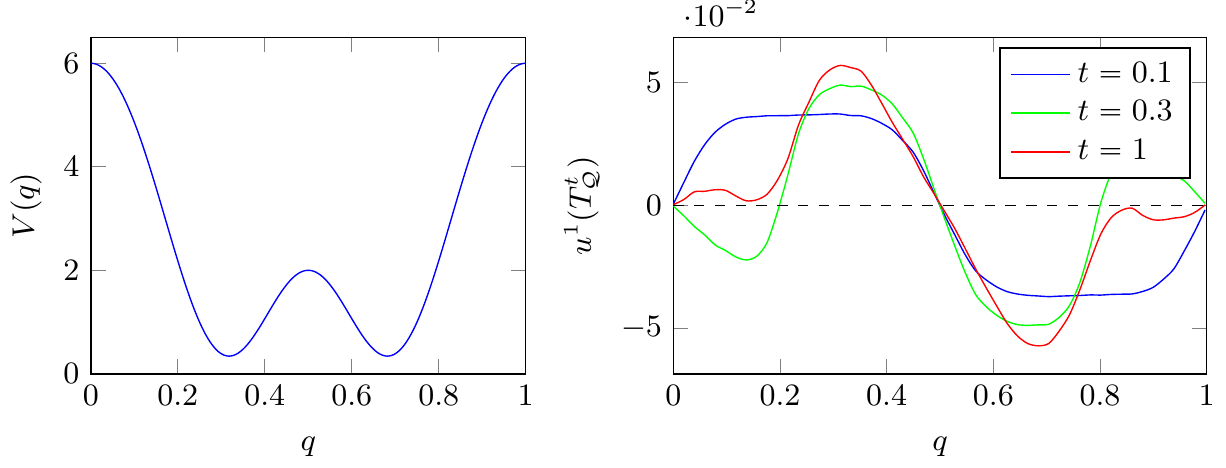}
\caption{The two wells of the periodic double well potential indicate two metastable regions in configuration space. The sign structure of the dominant eigenfunctions of $T^t_\Qspace$ reveals them.}
\label{fig:eigenfunctions}
\end{figure}

For varying $\ep=\gamma^{-1}$, we computed the largest lag time $t=t_\nu(\ep)$ such that the eigenfunctions $u_\ep=u_\ep^1$ at the subdominant eigenvalue $\lambda_\ep=\lambda^1_\ep$ of~$T^t_\Qspace$ and~$T^t_\text{Smol}$ differ by less than a given threshold $\nu$, i.e.\ we compute
\[
t_\nu(\ep) := 
\inf\left\{t > 0: \|u^1_\ep(T^t_\Qspace) - 
				   u^1_\ep(T^t_\text{Smol})\|_{\mathcal{L}^2_{f_{\Qspace}}} > \nu\right\}.
\]
Figure~\ref{fig:max_lag} shows $\ep\mapsto t_\nu(\ep)$ for $\nu=0.05$, and for comparison, the graph of $\ep\mapsto c_1\log(\ep)\ep^{-2}+c_2$ (where we obtained the constants~$c_1$ and~$c_2$ by a least squares fit on the given data).  Clearly, on the chosen domain for $t_\nu$, there is an excellent agreement with the estimate (\ref{eq:lag_estimate}). 

\begin{figure}[h!]
\centering
\includegraphics{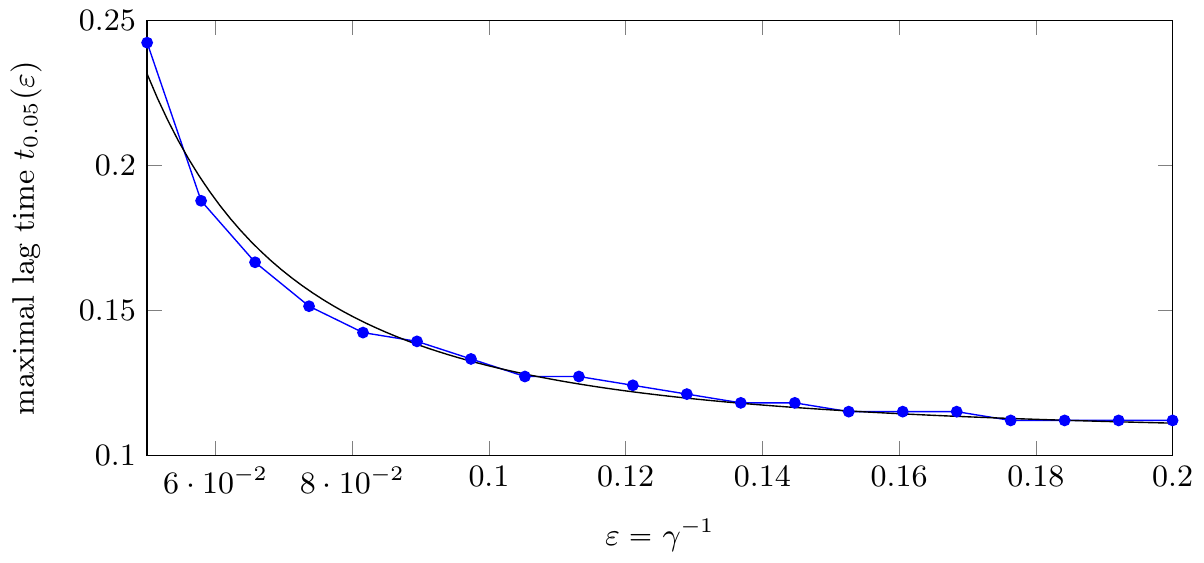}
\caption{$\ep$-dependence of the maximal lag time. The blue graph shows the largest lag time such that $\|u^1_\ep(T^t_\Qspace) - u^1_\ep(T^t_\text{Smol})\|_{\mathcal{L}^2_{f_{\Qspace}}}< 0.05$. The black graph is $c_1\log(\ep)\ep^{-2}+c_2$ with $c_1\approx -1.04\cdot 10^{-4},~c_2\approx 1.07\cdot 10^{-1}$ (from least-squares fitting). The eigenfunctions were computed using a simple Ulam discretization~\cite{BiKoJu14} of $T^t_\Qspace$ and $T^t_\text{Smol}$ with resolution $256$.}
\label{fig:max_lag}
\end{figure}


Although these first estimates allow merely a slight quantitative extension of the time scale on which the Smoluchowski dynamics approximates the spatial component of the  Langevin dynamics well, it suggests that the consideration of further structural information from the perturbation expansion may allow for an extension of approximation time scales beyond the current, or inspire corrections terms to do so.

\subsection{The Mori--Zwanzig formalism}	\label{ssec:MoriZwanzig}

In the previous section, we have analyzed a possibility to extend the time scales on which metastability information gained from the Smoluchowski equation is a good approximation to that of the actual model of interest, the Langevin dynamics.  The argument, however, required the smallness of the inverse damping coefficient $\ep = \gamma^{-1}$. In this section we turn to the question what can be done without this assumption.

The Mori--Zwanzig representation decomposes the differential equation governing the state variable of interest into terms according to their dependence on the same quantities of interest. Note that the formalism itself is quite general~\cite{Zwa73,ChHaKu00,ChHaKu02}; we will give an brief introduce that is tailored to our needs; cf.~also the related paper \cite{SkWo1979}. \\

Let $A$ be the infinitesimal generator of some propagator semigroup, which we will formally denote by $(e^{At})_{t\ge 0}$. This propagator acts on scalar functions $f$ which are functions of the \emph{full} state~$x$. Let $x = (\hat x,\tilde{x})$, where $\hat x$ is the state of interest (also called the \emph{resolved variables}). Let a distribution~$\mu$ be given over the state space, and define the projection operator~$\proj$ as expectation with respect to~$\mu$ conditional on~$\hat x$:
\[
\proj f(x) = \proj f(\hat x) := \mathbb{E}_{\mu}\left[f(x)\,\big\vert\, \hat x\right] = \frac{\int f(x)d\mu(\tilde{x})}{\int d\mu(\tilde{x})}
\] 
We are only interested in the evolution of average quantities conditional on~$\hat x$, i.e.\ in~$\proj e^{tA}$. Note that in the conformational analysis setting $x = (q,p)$,  $\hat x = q$, $\mu$ is the canonical measure with density $\fcan$, $A=A_{\rm Lan}$ from above, and thus $\proj e^{tA} = T^t_{\Qspace}$, the spatial propagator.

Let $\oproj = Id - \proj$ denote the projection orthogonal on the space of functions of~$\hat x$. A modified Mori--Zwanzig representation yields
\begin{equation}
\frac{d}{dt}\proj e^{tA} = \proj A \proj e^{tA} + \proj A \oproj \int_0^t e^{s A}\oproj Ae^{(t-s)\proj A}\,ds + \proj A \oproj e^{t\proj A}\,.
\label{eq:zwanzig1}
\end{equation}
It can be obtained by applying Dyson's formula~\cite{EvMo07} on~$e^{tA}$ in the second term on the right hand side of the identity $\tfrac{d}{dt}\proj e^{tA} = \proj A \proj e^{tA} + \proj A \oproj e^{tA}$, where we tacitly assume that the orthogonal dynamics in the space of the unresolved variables is well-posed; see  \cite{GiHaKu04} for details. 

Equation~\eqref{eq:zwanzig1} is hard to interpret in this form.  Again,  the structure of the problem at hand aids us: the assertions of the following Lemma can be checked by direct computation. From	 now on we work in the conformational analysis setting, i.e.~$\hat x$, $\mu$, and~$A$ are given as above.
\begin{lemma}	\label{lem:projids}
Let $f$ be a function independent of the variable $p$, i.e.\ $f(q,p) = f(q)$ $\forall q,p$. Then the following holds:
\begin{enumerate}[(a)]
\item $\proj f = f$ and $\oproj f = 0$;
\item $\proj A f=0$, thus $e^{t\proj A}f = f$ $\forall t\ge 0$, and $\proj A\proj = 0$;
\item $\proj A\oproj A f = A_{\rm Smol}f$.
\end{enumerate}
\end{lemma}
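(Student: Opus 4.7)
The plan is to verify all three identities by direct computation, exploiting the explicit form of the projector $\proj$ (conditional expectation against the Gaussian momentum marginal $f_{\Pspace}$) together with the decomposition $A_{\rm Lan} = A_{\rm Ham} + \ep^{-1} A_{\rm OU}$ already written out just before the lemma. The two elementary Gaussian identities that will do all the work are $\proj(p_i) = 0$ (odd first moment) and $\proj(p_i p_j) = \delta_{ij}$ (second moment of a unit-covariance Gaussian, which is what $f_{\Pspace}$ reduces to under the standing normalization $M = I$, $\beta = 1$).

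Part (a) is immediate since $\proj$ only integrates in $p$: a function of $q$ alone is invariant under $\proj$, so $\proj f = f$ and $\oproj f = 0$. For part (b), every $p$-derivative in $A$ annihilates $f(q)$, so only the Hamiltonian drift $p \cdot \nabla_q$ survives and one obtains $A f = p \cdot \nabla_q f$, a function linear in $p$. Applying $\proj$ then yields $\proj A f = \proj(p) \cdot \nabla_q f = 0$. Iterating, $(\proj A)^n f = 0$ for all $n \geq 1$, hence $e^{t\proj A}f = f$; replacing $f$ by $\proj g$, which depends only on $q$ for any $g$, upgrades this to $\proj A \proj = 0$.

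Part (c) is the substantive step and requires a second application of the generator. By part (b), $\oproj A f = A f - \proj A f = p \cdot \nabla_q f$. A term-by-term computation of $A(p \cdot \nabla_q f)$ produces $\sum_{i,j} p_i p_j \partial_i \partial_j f$ from $p \cdot \nabla_q$, the purely $q$-dependent piece $-\nabla_q V \cdot \nabla_q f$ from $-\nabla_q V \cdot \nabla_p$, and $-\ep^{-1} p \cdot \nabla_q f$ from the Ornstein--Uhlenbeck part (the $\Delta_p$ contribution kills a function linear in $p$). Finally $\proj$ collapses $\sum_{i,j} p_i p_j \partial_i \partial_j f$ to $\Delta_q f$ via the second-moment identity, preserves the middle term, and annihilates the odd-in-$p$ friction contribution, leaving $\Delta_q f - \nabla_q V \cdot \nabla_q f = A_{\rm Smol} f$.

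There is no genuine obstacle here; the calculation is mechanical. The point worth underscoring is that the friction scale $\ep^{-1}$ drops out, so the Smoluchowski operator extracted by one step of Mori--Zwanzig projection is independent of the damping---exactly mirroring the analogous phenomenon $\pgen{2} = G_{\rm Smol}$ established earlier in Lemma~\ref{altpseudogen} and Corollary~\ref{g2smolu}.
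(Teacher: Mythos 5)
Your computation is correct and supplies exactly the ``direct computation'' that the paper asserts but does not spell out: with $M=I$, $\beta=1$ (as the paper fixes in Section~\ref{ssec:perturbation}), $f_{\Pspace}$ is the standard Gaussian, and parts (a)--(c) follow from the mean-zero and unit-covariance moment identities applied to $A_{\rm Lan}f = p\cdot\nabla_q f$ and then to $A_{\rm Lan}(p\cdot\nabla_q f)$ term by term. Your remark that the $\ep^{-1}$ friction terms drop out under $\proj$ (either vanishing identically via $\Delta_p$ or being odd in $p$) is the right observation, and the approach is the only natural one, so there is nothing further to compare.
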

Applying identity~\eqref{eq:zwanzig1} to some~$f$ being a function only of the spatial variable~$q$, we obtain with Lemma~\ref{lem:projids}:
\begin{equation}
\frac{d}{dt}T^t_{\Qspace}f = \proj A \oproj \int_0^t e^{s A}\oproj Af\,ds\,.
\label{eq:zwanzig2}
\end{equation}
Approximating the integral as $\int_0^t h(s)ds = t\,h(0)+\mathcal{O}(t^2)$, we get
\[
\frac{d}{dt}T^t_{\Qspace}f = t \proj A \oproj Af + \mathcal{O}(t^2) = t A_{\rm Smol} f + \mathcal{O}(t^2)\,. 
\]
Integrating over $t$ yields
\begin{equation}
T^t_{\Qspace} f = f + \frac{t^2}{2}A_{\rm Smol} f + \mathcal{O}(t^3)\,.
\label{eq:reconstr1}
\end{equation}
This result can be seen as an analogue to~\eqref{taylorrestoredoperator}, with the advantage that it has been derived from an \emph{identity},~\eqref{eq:zwanzig2}, which now offers the possibility of a systematic exploitation of quadrature rules of increasing order to approximate the integral on its right hand side.
To this end, note that the range of $\oproj$ is orthogonal to $\fcan$. Thus, by the considerations in the previous section, $e^{tA}\oproj u = T^t_{\rm Lan}\oproj u$ decays exponentially as $t\to\infty$. This means that for some unknown $\lambda < 0$ the integral in~\eqref{eq:zwanzig2} has the form $\int_0^t e^{-\lambda s}g(s)e^{\lambda s}ds$, where $g(s)e^{-\lambda s} = \mathcal{O}(1)$ as $s\to\infty$. Integrals with exponential weights can be approximated by the Gau{\ss}--Laguerre quadrature rule. 

There are two issues which have to be addressed: the unknown~$\lambda <0$ and that we do not have an explicit expression for the function $g(s) = T^s_{\rm Lan}\oproj u$ for $s>0$. For the former, one could use the eigenvalues of the Smoluchowski propagator as an initial guess, and try to refine this estimate subsequently. For the latter, bootstrapping techniques (just as used for the derivation of Runge--Kutta methods in numerical integration) could help us to approximate $g(s)$ from derivatives of $g$ at~$s=0$.\\

We shall summarize which lines of attack so far the Mori--Zwanzig formalism offers to extend the time scales of approximation in molecular conformation analysis. 
\begin{enumerate}[1.]
\item Based on~\eqref{eq:zwanzig2} we are able to state a second-order accurate approximation of the spatial propagator by the Smoluchowski propagator. Using higher order quadrature rules (either based on Taylor expansions of the integrand or Gau{\ss}--Laguerre quadrature), it should be investigated whether \emph{simple higher order} approximations can be derived as well.\\
\item Repeating the Mori--Zwanzig procedure starting from the decomposition 
\[
\tfrac{d}{dt}\proj e^{tA} =  \proj e^{tA} \proj A + \proj e^{tA} \oproj A\,,
 \]
 one arrives at different representations than~\eqref{eq:zwanzig1}. This is the usual approach (cf.~\cite{ChHaKu02}), as it allows the interpretation of the arising terms as ``first order optimal prediction'', ``memory'', and ``noise'', however does not lead as simply to~\eqref{eq:reconstr1} as~\eqref{eq:zwanzig1} does. Nevertheless, it should be considered parallel to~\eqref{eq:zwanzig1}, as it may  reveal other important characteristics of the spatial dynamics.\\
 
\item In~\cite{ChHaKu02}, the short memory approximation $\int_0^th(s)ds \approx t\,h(0)$ has been used, however not for the position-momentum decomposition that we consider here. In the same work, different projections $\proj$ have been considered, e.g.\ finite rank projections to a set of basis functions. The results of Section~\ref{ssec:perturbation} suggest, that a projection of the spatial dynamics to the space spanned by dominant eigenfunctions of the Smoluchowski propagator may give a good approximation for larger times as well.
\end{enumerate}

\subsection{Almost Markovian behaviour: on bounding the approximation time scales}	\label{ssec:almostMarkov}

For small $t$, the non-Markovianity of spatial dynamics is an important feature which characterizes the density transport and metastability. We have seen that a~$\lambda\in\sigma(S^t)$ satisfies $\lambda\rightarrow 1$ as $t\to 0$ with a rate of $\mathcal{O}(\exp{(-\kappa t^2/2)})$ (for some suitable~$\kappa$), in contrast to the rate for semigroups of operators, which is~$\mathcal{O}(\exp{(-\kappa t)})$.

However, for larger $t$, $S^t$ exhibits a more regular, almost Markovian behaviour~\cite{ChodEtAl06,SwPiSu04}. We give ideas as to how this could be exploited for efficient computation of the eigenvalues of~$S^t$ in this time region. 

\paragraph{Relaxation times for momenta distributions.}
Langevin dynamics, the underlying model of $S^t$, is both Markovian and ergodic~\cite{MaSt02}. Due to ergodicity, we observe the convergence of any density to  the canonical density~$\fcan$. Moreover, for sufficiently large damping, the relaxation of the momentum coordinates is significantly faster than of the position coordinates, which can be seen by considering the associated Fokker--Planck equation:
$$
\frac{\partial f}{\partial t} = (L_{\rm Ham} + \gamma L_{\rm OU})f\,,\quad\text{with}\quad L_{\rm OU}g = \frac{1}{\beta}\Delta_p g + \nabla_p\cdot \left(g M^{-1} p\right).
$$
Thus, higher friction $\gamma$ implies that the Ornstein--Uhlenbeck-part dominates the time evolution. The solution of the Ornstein--Uhlenbeck Fokker--Planck equation is
\begin{equation}\label{OURelaxation}
g(t,p) = \int_{\Pspace} K(t,p,r) g(0,r)dr,
\end{equation}
with
\[
K(t,p,r) = \left(\det(2\pi\beta^{-1}C(t))\right)^{-1/2}\,\exp\left(-\frac{\beta}{2} \left(p-re^{-\gamma t}\right)^{T}C(t)^{-1}\left(p-re^{-\gamma t}\right)\right).
\]
and the covariance matrix
\[
C(t) = M \left({\rm id} - e^{-2\gamma M^{-1}t}\right)\,.
\]
Observe that the time variable~$t$ appears in $K$ always multiplied by~$\gamma$. Thus, the larger the damping~$\gamma$, the more rapidly~$g(t,\cdot)$ tends towards the stationary solution: 
\[
\lim_{t\to\infty} K(t,p,r) = f_\Pspace(p)\,.
\]

This suggests that we can find an \emph{optimal lag time} $\tau$, such that for all $t\geq \tau$ and for all $f:\mathcal{X}\rightarrow \mathbb{R}$
$$
P^t_{\rm Lan}f(q,p)\approx f^t(q)\fcan(q,p)
$$
for some $f^t:\Qspace\rightarrow \mathbb{R}$.

We use this to argue in favor of ``almost-Markovianity'' of~$S^t$: In the following let $t\geq \tau$. For $u:\Qspace\rightarrow \mathbb{R}$ there is an $u^t:\Qspace\rightarrow \mathbb{R}$ such that
$$
P^t_{\rm Lan}\big(u(q)\fcan(q,p)\big)\approx u^t(q)\fcan(q,p).
$$

Using this and the semi-group property of $P^t_{\rm Lan}$, we get
\begin{align*}
S^{2t}u(q) &= \frac{1}{f_{\Qspace}(q)}\int_\Pspace P^{2t}_{\rm Lan}\big(u(q)\fcan(q,p)\big)dp\\
 &\approx \frac{1}{f_{\Qspace}(q)}\int_\Pspace P^t_{\rm Lan}\big(u^t(q)\fcan(q,p)\big)dp\\
 &=S^tu^t(q)\\
 &= S^t\Big( \frac{1}{f_{\Qspace}(q)}\int_\Pspace u^t(q)\fcan(q,p) dp\Big)\\
 &\approx S^t\Big( \frac{1}{f_{\Qspace}(q)}\int_\Pspace P^t_{\rm Lan}\big(u(q)\fcan(q,p)\big) dp\Big)\\
 &=(S^t)^2u(q).
\end{align*}

Inductively, it follows that $S^{nt}\approx (S^t)^n$ for $t\geq\tau$, so in this sense, $S^t$ is almost a semigroup for big enough $t$. As the relaxation rate in (\ref{OURelaxation}) scales with $1/\gamma$, we expect the optimal lag time to do the same.



\paragraph{Extrapolating the restored operator.}
Now assume that $\tau>0$ is small enough to allow $R^\tau$ (or $E^\tau$) to be a reasonable approximation to $S^\tau$. Then
$$
S^{(n\tau)}\approx (S^{\tau})^n\approx (R^\tau)^n.
$$ 

We validate this with a simple numerical example. Using the one-dimensional periodic double-well potential introduced in Section \ref{ssec:perturbation}, we want to compute the second largest eigenvalue $\lambda^1(S^t) < \lambda^0(S^t)=1$, which provides insights into the stability of the two metastable sets, as of Theorem~\ref{metastability}. Note that $\lambda^1(R^t)$ does \emph{not} provide a good approximation for $t$ significantly larger than $\tau$, as the error assymptotics of Corollary \ref{taylorspectralerror} only hold for $t\rightarrow 0$.

With damping $\gamma=5$, a choice of $\tau=1/\gamma=0.2$ seems reasonable, as by visual inspection, $\lambda^1(S^t)$ in this region begins to show exponential decay. Moreover, $R^\tau$ and $E^\tau$ still approximate $S^\tau$ well enough: 
\[\lvert\lambda^1(S^\tau)-\lambda^1(R^\tau)\rvert\approx 0.15\,,\quad \lvert\lambda^1(S^\tau)-\lambda^1(E^\tau)\rvert\approx 0.12\,.
\] 
Figure \ref{fig:extrapolatedeigenvalues} compares $\lambda^1(S^t)$ with $\lambda^1\big(S^\tau\big)^n$, $\lambda^1\big(R^\tau\big)^n$ and $\lambda^1\big(E^\tau\big)^n$ for $n=1,\ldots,10$.
\begin{figure}[h!]\label{fig:extrapolatedeigenvalues}
\centering
\includegraphics{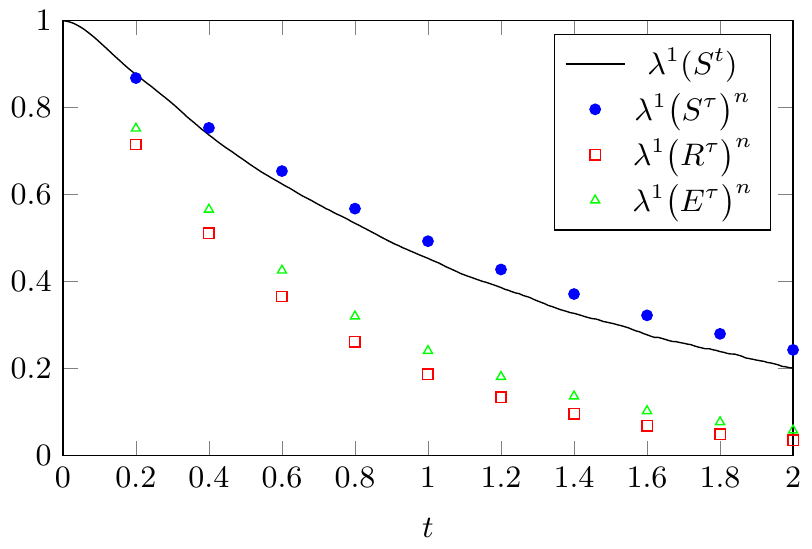}
\caption{Dominant eigenvalue of $S^t$ and its approximations}
\end{figure}
As an error estimate for the eigenvalues, we get
\begin{align*}
\big\lvert\lambda\big(S^{n\tau}\big)-\lambda\big((R^\tau)^n\big)\big\rvert &\leq \big\lvert \lambda\big(S^{n\tau}\big)-\lambda\big((S^\tau)^n\big)\big\rvert + \big\lvert \lambda\big((S^\tau)^n\big)-\lambda\big((R^\tau)^n\big)\big\rvert \\
&\leq \big\lvert \lambda\big(S^{n\tau}\big)-\lambda\big(S^\tau\big)^n \big\rvert + \big\lvert\lambda\big(S^\tau\big)-\lambda\big(R^\tau\big)\big\rvert^n,
\end{align*}
with using the binomial formula to obtain the second inequality. The first term on the right hand side depends on the relaxation of the underlying process after lagtime $\tau$, and (for fixed $n$) decreases with increasing $\tau$. The second term depends on the approximation error of $R^\tau$ on $S^\tau$ and increases with increasing $\tau$. A balance between these two error sources must thus be found. Typically, the optimal lag time lies in the approximation region of $R^t$ and $E^t$ only for high damping $\gamma$. This may or may not correspond to the physical model at hand, and is a significant limitation of the eigenvalue extrapolation method. Alternative restored operators (as proposed in Section~\ref{ssec:MoriZwanzig}) may allow the application for smaller values of~$\gamma$.



\section{Discussion} \label{discussion}

We have considered the dynamics of the position coordinate for a molecular dynamics system given by the Langevin process in thermal equilibrium. After deriving the high friction limit in generalized coordinates, and obtaining the associated Kramers--Smoluchowski dynamics, we have seen that the Smoluchowski equations show up in the evolution of the position coordinates also for any~$\gamma$, in the short time asymptotics after rescaling time according to $t\mapsto  t^2/2$. This can be extended from position coordinates to essential and reaction coordinates (shown for the scalar case here). 
Finally, we discussed possible approaches to stretch the approximation time scales of these pseudo generator methods. Here we argued that these time scales on which a good approximation has to be achieved, are actually finite due to the ergodicity of the Langevin process, and their upper bound decreases with increasing damping~$\gamma$.

%
%
%
%


The numerical experiments in~\cite{BiKoJu14} suggest that our theoretical findings on the asymptotic approximation error can be extended to the dominant spectrum as well, hence that the approach is applicable for metastability analysis.  In order to be applicable to bio-chemically relevant systems, two main points have to be addressed: 
\begin{itemize}
\item[(a)] \emph{Extension of the approximation quality for larger time scales.}

An important aspect of the pseudo generator approach is that it gives a practical tool to systematically derive coarse-grained models of molecular motion by projecting the dynamics onto a subspace of essential coordinates.     
It is yet unclear, however, how the projection onto essential
coordinates influences the approximation quality of the projected pseudo generator $G_{2}^{\rm ess}$ on the original process. We expect the dominant eigenfunctions of
$G_{2}^{\rm ess}$ to be usable to reliably identify metastable sets,
if the selected essential coordinates are slow-moving in comparison to
the "non-essential" coordinates. 
A perturbation expansion in the style
of Section~\ref{ssec:perturbation} might be be able to provide a
rigorous error estimate, and identify the role of the non-essential
coordinates in the approximation. Also, the involvement of higher order derivatives of~$S_{\rm ess}^t$ in the approximation scheme seems promising  (cf.~\cite{ChHaKu00}).
Taking into account higher order terms in the derivation of the pseudo generators seems especially useful when accurate coarse-grained diffusion models in terms of few collective variables are sought in cases when no explicit small parameter is available and therefore traditional averaging or homogenization methods to eliminate unresolved degrees cannot be applied \cite{LeLe10,LuVE14}.\\      

\item[(b)] \emph{Numerical discretization.}

We derived a differential operator expression for projected
pseudo-generators in essential coordinates  ($G_2^{\rm ess}$, cf.\ Lemma~\ref{lemma:projectedpseudogens}) and saw that this operator has a
simple, closed form that can again be interpreted as the generator of a diffusion process. {  Its discretization, especially for multidimensional reaction coordinates, can be conveniently done via spectral collocation using the Feynman--Kac representation of the underlying partial differential equation or the transfer operator, depending on which type of problem is considered; for details, see e.g.~\cite{Mai09,Fro13,ScSa15}. We should stress, however, that while for the unprojected operator,~$G_2$, the collocation matrix can be set up analytically~\cite{BiKoJu14},
for the projected one, $G_2^{\rm ess}$, high-dimensional integrals over non-essential
degrees of freedom are involved. Sampling-based quadrature seems to be
the natural treatment here (see  \cite{CiLeVE08,Har08,FEBook}).}

Further, even if the reduction to a comparatively small number of reaction coordinates can be carried out, the discretization of the corresponding pseudo generators will become computationally challenging due to the curse of dimension if there are more than, say, six of these degrees of freedom.  On the other hand, the macroscopic dynamics of the molecular system is taking place on an essentially one-dimensional skeleton: Apart from motion within metastable states (i.e.\ the conformations of the molecule), the vast majority of conformational transitions occurs along a few dominant, low dimensional transition pathways \cite{EVE04,EVE06,EVE10}.  While metastable states correspond to densities which are almost fixed points under the action of some transfer operator, the transitions can be modelled as curves in the space of densities. This picture alludes to numerical techniques for computing low-dimensional (invariant) sets in systems with higher dimensional state spaces \cite{DeHo97,Del99}, using ansatz functions of higher smoothness in combination with a meshfree approach \cite{Web12}.
\end{itemize}



\appendix

\section{Coordinate expressions for the Smoluchowski equation}

In order to compute the right hand side in (\ref{bweEff}) explicitly, we 
observe that, for functions $\psi=\psi(\bm{u},t)$, 
\begin{eqnarray*}
A_{\text{Ham}} \psi = (G^{-1}\bm{v})\cdot \nabla \psi
\end{eqnarray*}
where $\nabla \psi(\bm{u},t)$ is understood as the derivative 
\wrt the spatial argument, here $\bm{u}$. Upon noting that
\begin{eqnarray*}
A_{\text{OU}}\left(\gamma^{-1}\bm{v} \right) = -G^{-1}\bm{v}\,,
\end{eqnarray*}
with $A_{\text{OU}}$ acting component-wise from the left, we find that 
\begin{eqnarray*}
A_{\text{OU}}^{-1}A_{\text{Ham}} \psi = -(\gamma^{-1}\bm{v})\cdot \nabla \psi
\end{eqnarray*}
for the action of $A_{\text{OU}}^{-1}$ on $A_{\text{Ham}} \psi\in{\rm ran}\,A_{\text{OU}}$. Therefore
\begin{eqnarray*}
A_{\text{Ham}}A_{\text{OU}}^{-1}A_{\text{Ham}}\psi  &=& \sum_{i,j}\gamma^{ij}\left[\frac{\partial}{\partial u_j}
\left(V + \frac{1}{2}\bm{v}\cdot G^{-1}\bm{v} \right)\right]\frac{\partial \psi}{\partial u_i}\\
& & - \sum_{i,j}\left(G^{-1}\bm{v}\right)_i\left[\frac{\partial}{\partial u_i}
\left(\gamma^{-1}\bm{v}\right)_j\right]\frac{\partial \psi}{\partial u_j}\\
 && -\sum_{i,j} \left(G^{-1}\bm{v}\right)_i \left(\gamma^{-1}\bm{v}\right)_j \frac{\partial^2\psi}{\partial u_j u_i}\,,
\end{eqnarray*}
where upper indices indicate inverse matrices, i.e., 
$\gamma^{ij}=(\gamma^{-1})_{ij}$. Using the identity
\begin{eqnarray*}
\int_{\R^{d}}\bm{v}\cdot B\bm{v}\,\varrho_{\bm{u}}(\bm{v})\,d\bm{v} = \frac{1}{\beta}\tr(GB)\,,\quad B\in\R^{d\times d}\,,
\end{eqnarray*}
with $\varrho_{\bm{u}}$ as given by (\ref{rhou}), 
we can easily evaluate the integral in (\ref{bweEff}), which yields 
\begin{eqnarray*}
\bar{A}\psi =   \sum_{i,j}\left[\gamma^{ij}
\left(-\frac{\partial V}{\partial u_j}
+ \frac{1}{2\beta}\tr\left(G^{-1}
\frac{\partial G}{\partial u_j}\right)\right)
\frac{\partial\psi}{\partial u_i}+
\frac{1}{\beta}
\left(\frac{\partial \gamma^{ij}}{\partial u_j}
\frac{\partial}{\partial u_i} +
\gamma^{ij}\frac{\partial^2 \psi}{\partial u_i u_j}\right)\right]\,. 
\end{eqnarray*}
In the last equation we have used the shorthand
\[
\bar{A}\psi = -\int_{\Pspace} \left(A_{\text{Ham}}A_{\text{OU}}^{-1}A_{\text{Ham}}\psi\right)\varrho_{\bm{u}}(\bm{v})\,d\bm{v}\,,
\]
Employing Jacobi's formula $(\det G)'=\det G\,{\rm{tr}}(G^{-1}G')$ and the fact that $\det G=\det M \det(\nabla\Phi^{T}\nabla\Phi)$, the 
above expression for $\bar{A}$ can be recast as desired: 
\begin{equation*}
\bar{A} = \beta^{-1}\tilde{\Delta} - {\nabla} V\cdot \tilde{\nabla}\,,
\end{equation*}
where
\begin{equation*}
\tilde{\nabla}  = \gamma^{-1}  \nabla \quad\mbox{ and }\quad
\tilde{\Delta}=\frac{1}{\sqrt{\det \gamma}} \nabla\cdot
\left(\sqrt{\det \gamma}\, \gamma^{-1} \nabla\,\right)\,,
\end{equation*}
denote gradient and Laplace-Beltrami operator \wrt $\gamma$. 
Note that $\bar{A}$ no longer depends on the constant mass matrix $M$.

\bibliographystyle{abbrv}
\bibliography{bittracher_et_al}
\end{document}